\newtheorem{theorem}{Theorem}[section]
\newtheorem{corollary}[theorem]{Corollary}
\newtheorem{lemma}[theorem]{Lemma}
\theoremstyle{definition}
\theoremstyle{remark}
\newtheorem{remark}[theorem]{Remark}
\numberwithin{equation}{section}
\def\Lin{\mathrm{Lin}}
\def\Bil{\mathrm{Bil}}
\newcommand{\al}{\alpha}
\newcommand{\be}{\beta}
\newcommand{\de}{\delta}
\newcommand{\ep}{\varepsilon}
\newcommand{\ga}{\gamma}
\newcommand{\la}{\lambda}
\newcommand{\om}{\omega}
\newcommand{\si}{\sigma}
\newcommand{\De}{\Delta}
\def\hf{\widehat f}
\newcommand{\tu}{\widetilde{u}}
\newcommand{\tW}{\widetilde{W}}
\def\RR{\mathbb{R}}
\def\BB{\mathbb{B}}
\def\ZZ{\mathbb{Z}}
\def\PP{\mathbb{P}}
\def\TT{\mathbb{T}}
\def\QQ{{\mathbb{Q}}}
\renewcommand\SS{\mathbb{S}}
\newcommand{\cM}{{\mathcal M}}
\newcommand{\cN}{{\mathcal X}}
\newcommand{\cR}{{\mathcal R}}
\newcommand{\cS}{{\mathcal S}}
\newcommand{\cW}{{\mathcal W}}
\newcommand{\pd}{\partial}
\newcommand\minus\backslash
\newcommand{\id}{{\rm id}}
\newcommand\lan\langle
\newcommand\ran\rangle
\newcommand{\e}{{e}}
\DeclareMathOperator\Div{div}
\renewcommand\leq\leqslant
\renewcommand\geq\geqslant
\newlength{\intwidth}
\newcommand\loc{_{\mathrm{loc}}}
 \DeclareMathOperator\curl{curl}
\def\T{_{\TT^3}}
\begin{document}

\title[Vortex reconnection in the Navier--Stokes equations]{Vortex
  reconnection in the three dimensional Navier--Stokes equations}

\author{Alberto Enciso}
\address{Instituto de Ciencias Matem\'aticas, Consejo Superior de
  Investigaciones Cient\'\i ficas, 28049 Madrid, Spain}
\email{aenciso@icmat.es, renato.luca@icmat.es, dperalta@icmat.es}

\author{Renato Luc\`a}

\author{Daniel Peralta-Salas}

%
%
\begin{abstract}
We prove that the vortex structures of solutions to the 3D
Navier--Stokes equations can change their topology without any loss of
regularity. More precisely, we construct smooth high-frequency solutions to
the Navier--Stokes equations where vortex lines and vortex tubes of
arbitrarily complicated topologies are created and destroyed in
arbitrarily small times. This instance of vortex reconnection is
structurally stable and in perfect agreement with the existing
computer simulations and experiments. We also provide a
(non-structurally stable) scenario where the destruction of vortex
structures is instantaneous.
\end{abstract}
\maketitle

\section{Introduction}

A fundamental feature of inviscid incompressible fluids in three dimensions is that
the vorticity is transported along the fluid flow. More precisely,
if~$u(x,t)$ is the velocity field of a fluid satisfying the 3D Euler
equations,
\[
\pd_t u+ (u\cdot\nabla)u=-\nabla P\,,\qquad \Div u=0\qquad u(\cdot,0)=u_0\,,
\]
the vorticity $\om:=\curl u$ is known to evolve according to the
transport equation
\[
\pd_t\om=(\om\cdot\nabla) u-(u\cdot\nabla) \om\,.
\]
This equation ensures that the vorticity at time~$t$ can be written in terms
of the initial vorticity $\om_0$ as
\[
\om(\cdot,t)=\phi_{t*}\,\om_0\,,
\]
that is, as the push-forward of the initial vorticity along the
time~$t$ flow generated by the velocity field. It then follows that, as long as the
solution of the Euler equations does not blow up, there are no changes
in the topology of the vortex structures of the fluid, such as
vortex tubes or vortex lines. Recall that a vortex line at
time~$t$ is an integral curve of the vorticity frozen at time~$t$ and
a vortex tube is a toroidal surface (that is, a smooth embedded torus)
arising as a union of vortex lines.

In presence of viscosity, the vorticity is no longer transported along
the flow because the diffusion gives rise to a different phenomenon
known as vortex reconnection. In short, one says that a
vortex reconnection has occurred at time~$T$ if 
the vortex structures at time~$T$ and at time~0 are not homeomorphic, so
there has been a change of topology. For example, a certain vortex
tube can break and there can appear vortex
tubes or vortex lines that are knotted or linked in a
different way as the initial vorticity.

As discussed e.g.\ in~\cite{Kida,Constantin1} and references therein, there is
overwhelming numerical and physical evidence for vortex
reconnection. Particularly relevant for our purposes are the recent
experimental results presented in~\cite{Irvine,Irvine2}, where the
authors study how vortex lines and tubes of different knotted
topologies reconnect in actual fluids using cleverly designed hydrofoils. It is worth mentioning that
the authors observe that these vortex reconnections can occur in very
small times even for fluids with small viscosity. From the
computational point of view, the reconnection of a trefoil-shaped
vortex tube has been recently studied in detail by Kerr~\cite{Kerr}.

In contrast with the wealth of heuristic, numerical and experimental
results on this subject, a mathematically rigorous scenario of vortex reconnection has
never been constructed so far. As discussed in~\cite{Constantin2}, this is
probably due to the fact that with purely analytical methods it is
difficult to analyze the time evolution of the Navier--Stokes
equations to show that vortex reconnection actually takes place. In
the more complex but similar case of magneto-hydrodynamics, magnetic
reconnection (that is, the breaking and topological rearrangement of
magnetic field lines) is known to occur and has deep physical implications.

Our objetive in this paper is to fill this long-standing gap by
providing  a rigorous mechanism of vortex reconnection
in viscous incompressible fluids. Once one comes up with the
mechanism, it is not hard to see that it is actually quite flexible, so here we will strive to present the mechanism in the simplest,
least technical situation.
We provide a detailed discussion of the role that
every element plays in the proof of this result in
Section~\ref{S.discussion}.  In view of the aforementioned
experimental and numerical results~\cite{Irvine,Irvine2,Kerr}, we will be particularly
interested in proving that vortex
structures of any knot or link type can be spontaneously created or destroyed.

The context in which we carry out the analysis is the 3D~Navier--Stokes equations,
\begin{equation*}
\pd_t u+ (u\cdot\nabla)u-\nu\De u=-\nabla P\,,\qquad \Div u=0\,,\qquad
u(\cdot,0)=u_0\,.
\end{equation*}
We will impose periodic boundary conditions, so the spatial variable
will take values in the torus $\TT^3$ with
$\TT:=\RR/2\pi\ZZ$. Hereafter the viscosity~$\nu$ will be a fixed
positive constant. 

We will next state two results on rigorous vortex reconnection. The
first one says that there can be vortex reconnection at arbitrarily
small times. More generally, we will construct a finite
cascade of reconnections at any sequence of times 
\[
T_1<T_2<\cdots <T_n\,,
\]
meaning that there is a smooth solution to the Navier--Stokes
equations, which one can even assume to be global, such that it has
some vortex structures at time~$T_k$ (for each odd integer~$k$) that do not have the
same topology as any of the vortex structures present at the
times~$T_{k-1}$ or~$T_{k+1}$. Furthermore, the scenario of reconnection that we present is
{\em structurally stable}, by which we mean that:
\begin{enumerate}
\item The vortex reconnection phenomenon occurs (with vortex
  structures of the same topology) for any initial
  datum that is close enough in $C^{4,\al}(\TT^3)$ to the initial
  velocity discussed in the theorem. 
\item The existence of non-homeomorphic vortex structures occurs not only between the times
  $T_k$ and~$T_{k\pm1}$ with $k$ odd, but also between any nonnegative times $t_k$ and $t_{k\pm1}$ for which $|T_k-t_k|+
  |T_{k\pm1}-t_{k\pm1}|$ is small enough.
\end{enumerate}
Notice that this condition ensures that the vortex
reconnection is experimentally observable. The result can be stated as
follows, where, for simplicity, when the toroidal
surface encloses a bounded domain (and this condition is non-trivial
when periodic boundary conditions are considered in the equations),
with some abuse of notation we will also refer to this domain as a
vortex tube.

\begin{theorem}\label{T1}
  Given any constants $0=:T_0< T_1<\cdots< T_n$ and $M>0$, for each
  odd integer~$k$ in $[1,n]$ let us denote by $\cS_k$ any finite
  collection of closed curves and
  toroidal domains (with pairwise disjoint closures but possibly
  knotted and linked) that are contained in the unit ball of~$\TT^3$. Then there is a global smooth solution
  $u:\TT^3\times [0,\infty)\to\RR^3$ of the Navier--Stokes
  equations, with a high-frequency initial datum of norm
  $\|u_0\|_{L^2}=M$ and of zero mean, which, for each odd integer
  $k\in[1,n]$, exhibits at time~$T_k$ a set of
  vortex lines and vortex tubes diffeomorphic to~$\cS_k$ that is not
  homeomorphic to any of the vortex structures of the fluid at time~$T_{k-1}$ or~$T_{k+1}$. This
  scenario of vortex reconnection is structurally stable.
\end{theorem}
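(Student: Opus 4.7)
The plan is to reduce the Navier--Stokes dynamics to the linear Stokes system via a high-frequency ansatz, design a tuned initial datum whose Stokes evolution exhibits the prescribed topologies at the odd times~$T_k$, and then transfer the conclusion to the genuine Navier--Stokes solution using structural stability of the vortex structures.

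For each odd~$k$, we will invoke a realisation theorem for vortex structures (in the spirit of the Beltrami-field constructions developed by the authors in earlier work) to produce a smooth, divergence-free, zero-mean trigonometric polynomial~$W_k$ on~$\TT^3$, with Fourier support in a shell~$|\xi|\sim\mu_k$, whose vorticity has vortex lines and tubes diffeomorphic to~$\cS_k$ in a \emph{structurally stable} way: every divergence-free field sufficiently close to~$\curl W_k$ in~$C^1$ has vortex structures of the same diffeomorphism type. The initial datum will be
\begin{equation*}
  u_0 \;:=\; A \sum_{k\text{ odd}} c_k\, W_k,
\end{equation*}
with amplitudes $c_k>0$ and prefactor~$A>0$ to be chosen. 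The associated Stokes evolution is $v(\cdot,t)=A\sum_k c_k e^{-\nu t\mu_k^2}W_k$, and we will tune the~$c_k$ so that at each~$T_k$ the $k$-th summand dominates in the $C^1$ norm:
\begin{equation*}
  c_k\mu_k e^{-\nu T_k\mu_k^2}\;\gg\; c_j\mu_j e^{-\nu T_k\mu_j^2}\qquad(j\ne k,\ j\text{ odd}).
\end{equation*}
A direct calculation shows that this triangular system is consistent precisely when the frequencies~$\mu_k$ are taken strictly decreasing in~$k$ and well separated; the~$c_k$ are then determined inductively, and~$A$ is picked so that $\|u_0\|_{L^2}=M$. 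A Duhamel argument on the bounded interval $[0,T_n]$, exploiting smallness and the high-frequency regime, will then show that the Navier--Stokes solution~$u$ starting from~$u_0$ exists smoothly and globally on~$\TT^3\times[0,\infty)$, and on~$[0,T_n]$ satisfies $\|u-v\|_{C^{4,\al}}\ll c_k\mu_k e^{-\nu T_k\mu_k^2}$ at every~$T_k$. Combined with structural stability, this implies that $\curl u(\cdot,T_k)$ has vortex structures diffeomorphic to~$\cS_k$. The stability claims (i) and (ii) follow from the same structural stability applied to $C^{4,\al}$-small perturbations of~$u_0$ and to nearby times; non-homeomorphism with the vortex structures at the even times~$T_{k\pm1}$ is enforced by adding auxiliary summands $W_m$ for even~$m$ realising any topology distinct from that at the neighbouring odd times.

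The main obstacle will be the coupled calibration of the parameters $(\mu_k,c_k,A)$. A single initial datum of fixed $L^2$-norm~$M$ must realise several pairwise non-homeomorphic topologies at prescribed times, which forces a delicate balance between the exponential Stokes decays~$e^{-\nu T_k\mu_k^2}$, the relative sizes of the~$\mu_k$, and the Duhamel error bounding $u-v$; since the dominant Stokes summand at~$T_k$ may itself be exponentially small, the nonlinear correction has to be controlled to an even smaller scale, and the high-frequency regime is in turn what keeps the Navier--Stokes evolution smooth and close to the Stokes evolution on~$[0,T_n]$.
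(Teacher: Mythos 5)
Your overall architecture (realise each $\cS_k$ by a structurally stable high-frequency Beltrami-type field $W_k$, superpose them with tuned amplitudes so that the $k$-th summand dominates the linear evolution at time $T_k$, and control the Navier--Stokes solution by Duhamel plus structural stability) matches the paper's Steps 1--3 quite closely. But there is a genuine gap at the heart of the matter: the theorem does not merely ask that the fluid \emph{exhibit} some prescribed topology at each time; it asks that the set of vortex structures appearing at the odd time $T_k$ be \emph{not homeomorphic to any} vortex structure of the fluid at $T_{k\pm1}$. Your final sentence --- ``non-homeomorphism with the vortex structures at the even times is enforced by adding auxiliary summands $W_m$ for even $m$ realising any topology distinct from that at the neighbouring odd times'' --- only produces \emph{some} vortex structures of a chosen type at the even times. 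Structural stability is an existence statement under perturbation; it gives no control whatsoever over the \emph{other} vortex lines and tubes of $\curl u(\cdot,T_{k\pm1})$, which could perfectly well contain a copy of $\cS_k$ of size $10^{-80}$ bifurcating from a degenerate orbit. This universally quantified negative statement is exactly what the paper identifies as ``the real enemy,'' and your proposal does not address it.

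The paper's resolution is to use, at the even times, the explicit shear Beltrami fields $B_N=(2\pi)^{-3/2}(\sin Nx_3,\cos Nx_3,0)$ and to prove (Lemma on robust non-equivalence) that \emph{every} vector field uniformly close to $B_N$ has \emph{no} contractible vortex lines or tubes at all, so that contractibility serves as a topological invariant separating all structures at even times from the prescribed contractible set $\cS_k$ at odd times. That lemma is itself nontrivial: it requires a KAM-type confinement estimate (codimension-one invariant tori of the perturbed field prevent drift in $x_3$, uniformly in $N$) to rule out integral curves that wind for a long time and then close up into a contractible knot. Two secondary points: (a) your $C^1$ structural stability is too weak --- preservation of vortex tubes (invariant tori) under perturbation requires $C^{3,\al}$-type control, and the smallness threshold must be uniform in the frequency $\mu_k$, which forces the careful rescaled estimates of the paper's Theorem on high-frequency Beltrami fields; (b) since $\|u_0\|_{L^2}=M$ is arbitrary, the paper puts essentially all of the norm into a single explicit background $M\,B_{N_0}$ whose exact Navier--Stokes evolution is known, and treats everything else as a small perturbation via a quantitative stability theorem whose constants depend only on $\|w\|_{L^2L^\infty}$; your datum, carrying norm $M$ in the tuned sum itself, leaves the global existence and the smallness of the Duhamel correction unjustified as stated.
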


\begin{remark}
A more visual description of the reconnection process is the
following. For a suitably chosen smooth but highly oscillatory initial
datum~$u_0$, all the vortex structures of the corresponding solution
at times $T_k$ with $k$ even wind around a direction of the torus,
while for times $T_k$ with $k$ odd the solution presents a set of
vortex structures $\cS_k$ of arbitrarily complicated knot types that is
contained in a small ball. Hence the vortex structures $\cS_k$ must
have been created at some time between $T_{k-1}$ and $T_k$ and are
destroyed between $T_k$ and $T_{k+1}$. This phenomenon is still observable
both if one introduces small perturbations of the initial datum and if
one measures the vortex structures at slightly different times. The
roles of even and odd times can obviously be exchanged.
\end{remark}

\begin{remark}
It is possible to prescribe the Reynolds number of the initial datum
instead of its $L^2$~norm. Details are given in Remark~\ref{R.Reynolds}.
\end{remark}

The second result shows that when one starts with an initial
vorticity that is not structurally stable, vortex reconnection can
take place instantaneously. The idea is that one can show that a vortex
tube present at time~0 does not need to survive for positive times, as
the vortex lines sitting on that vortex tube can rearrange
instantaneously and change their topology:

\begin{theorem}\label{T2}
  Given any $M>0$, there is a global $C^\infty$ solution of the
  Navier--Stokes equations $u:\TT^3\times [0,\infty)\to\RR^3$, 
  with initial datum of norm $\|u_0\|_{L^2}=M$ and of
  zero mean, which has a vortex tube at time~$0$ that breaks
  instantaneously. 
\end{theorem}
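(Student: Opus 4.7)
The plan is to take the initial velocity to be a smooth $2.5$D steady Euler solution that fails to be a Beltrami field. For any steady Euler solution, the nonlinear term of the vorticity equation vanishes identically at $t=0$, so the instantaneous time derivative of $\om$ there is purely viscous; the failure of the Beltrami property then guarantees that $\Delta\om_0$ has a nonzero normal component at some point of a well-chosen vortex tube $\cT$, forcing $\cT$ to cease being invariant for every sufficiently small $t>0$.

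Concretely, I would take a smooth stream function $\psi:\TT^2\to\RR$ satisfying a semilinear elliptic equation $\Delta_2\psi=F(\psi)$ on $\TT^2$, and a smooth function $G:\RR\to\RR$ that is not affine. Setting
\[
u_0(x,y,z)\;=\;\lambda\bigl(\pd_y\psi(x,y),\,-\pd_x\psi(x,y),\,G(\psi(x,y))-c_G\bigr),
\]
with $c_G$ chosen so the spatial mean vanishes and $\lambda$ so that $\|u_0\|_{L^2}=M$, one checks that $u_0$ is a smooth divergence-free steady Euler solution on $\TT^3$ whose vorticity is $\om_0=\lambda\bigl(G'(\psi)\pd_y\psi,\,-G'(\psi)\pd_x\psi,\,-F(\psi)\bigr)$. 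If $c\in\RR$ is chosen so that $\{\psi=c\}\subset\TT^2$ is a regular simple closed curve, the cylindrical extension $\cT:=\{\psi=c\}\times\TT^1\subset\TT^3$ is a smoothly embedded two-torus, and both $u_0$ and $\om_0$ are tangent to it (each has zero inner product with $(\pd_x\psi,\pd_y\psi,0)$). Hence $\cT$ is a vortex tube at $t=0$.

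Because $u_0$ is a steady Euler solution, the vorticity equation gives $\pd_t\om|_{t=0}=\nu\,\Delta\om_0$. A direct computation using $\Delta_2\psi=F(\psi)$ yields, on $\cT$,
\[
\Delta\om_0\cdot n\;=\;\frac{2\lambda\,G''(\psi)}{|\nabla\psi|}\Bigl[(\psi_x^2-\psi_y^2)\psi_{xy}+\psi_x\psi_y(\psi_{yy}-\psi_{xx})\Bigr],
\]
where $n=(\psi_x,\psi_y,0)/|\nabla\psi|$. Taking for instance $\psi(x,y)=\sin x\sin y$ (for which $F(s)=-2s$), $G(s)=s^3/3$, and $c>0$ small, one verifies that this expression does not vanish at some point $p\in\cT$. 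Since $\om$ is $C^1$ in $t$, one has $\om(p,t)\cdot n(p)=\nu t\,\Delta\om_0(p)\cdot n(p)+O(t^2)\neq 0$ for all sufficiently small $t>0$, so $\cT$ is no longer an invariant surface of $\om(\cdot,t)$: the vortex tube has broken instantaneously. Global smoothness of the Navier--Stokes solution is automatic, since $2.5$D symmetry (independence of $z$) is preserved by the flow; the problem reduces to standard $2$D Navier--Stokes for the horizontal velocities coupled with a linear advection-diffusion equation for $u_3$, both of which are globally well posed in $C^\infty$.

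The main obstacle is arranging that $\Delta\om_0$ has a nonzero normal component at some point of $\cT$, which fails for Beltrami fields since then $\Delta\om_0\propto\om_0$ is tangent. In the formula above this obstruction shows up transparently as the factor $G''(\psi)$, so any non-affine $G$ combined with a $\psi$ whose Hessian bracket $(\psi_x^2-\psi_y^2)\psi_{xy}+\psi_x\psi_y(\psi_{yy}-\psi_{xx})$ does not vanish identically on the level set $\{\psi=c\}$ suffices.
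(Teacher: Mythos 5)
Your argument establishes only that the \emph{fixed} surface $\cT$ ceases to be invariant for small $t>0$; it does not establish that the vortex tube breaks. Reconnection, as defined here, is a change in the \emph{topology} of the vortex structures, so you must exclude the possibility that the tube merely drifts into a nearby invariant torus diffeomorphic to $\cT$ --- you have to beat every diffeomorphism, not just the identity. This is precisely the difficulty the paper singles out, and it is why its proof of Theorem~\ref{T2} selects a \emph{resonant} torus (rational rotation number, hence not structurally stable) inside a foliation by invariant tori and runs a subharmonic Melnikov argument: since the Melnikov function $\cM(\xi)$ has only nondegenerate zeros, that torus is genuinely destroyed, with only four periodic vortex lines surviving, two of them hyperbolic with intersecting stable and unstable manifolds, and with homoclinic/heteroclinic vortex lines appearing that cannot lie on any invariant torus. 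The pointwise inequality $\pd_t(\om\cdot n)|_{t=0}\neq 0$ carries no such information.

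Worse, your specific $2.5$D ansatz cannot exhibit reconnection at a regular level set at all. If the data are $z$-independent the solution remains $z$-independent, so at every time $\om=(\pd_y u_3,\,-\pd_x u_3,\,\pd_x v_2-\pd_y v_1)$ with $u_3=u_3(x,y,t)$, whence $\om\cdot\nabla u_3\equiv 0$: the third velocity component is a first integral of the vorticity for \emph{all} $t$. Every vortex line therefore lies on a cylinder $\{u_3(\cdot,t)=a\}\times\TT^1$, and since your $\cT$ sits over a regular closed level curve of $u_3(\cdot,0)$ (note $G'(c)=c^2\neq0$ in your example), for small $t$ the nearby level curve of $u_3(\cdot,t)$ is still a closed curve diffeomorphic to it and $\om$ is nonvanishing there; the vortex tube survives, merely displaced. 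The symmetry you invoke to get global well-posedness is exactly what forbids the reconnection you want. (Your observation that $\pd_t\om|_{t=0}=\nu\,\De\om_0$ for a steady Euler datum is correct and consistent with Remark~\ref{R.nonlinear}, which notes that the nonlinear term contributes nothing to the reconnection mechanism; the flaw lies in the conclusion drawn from it.)
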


\begin{remark}
More visually, the proof of the theorem shows that at time~$0$ the space~$\TT^3$ is
  covered by vortex tubes that wind around two of the directions of
  the torus and all the vortex lines are periodic or
  quasi-periodic and tangent to these tori. In contrast, at any small
  enough positive time one (or any finite number) of the initial invariant tori is broken, and
  the rearrangement of the associated initial vortex lines gives
  rise to vortex lines that are not periodic or quasiperiodic and are
  not tangent to a vortex tube. The destruction of this invariant torus
  creates isolated periodic vortex lines with intersecting stable
  and unstable manifolds.
\end{remark}

Let us give some heuristic ideas about the proof of these results. The
first key observation is that, in order to prove these results, the
real enemy is not only the fact that the Navier--Stokes equations are
notoriously difficult to analyze, but rather the need to prove that a
certain vortex structure originating at time $T$ is not diffeomorphic
to any of the structures initially present in the fluid. The
difficulty here is that, as one needs to consider all diffeomorphisms
(not just the flow of the velocity field), the way the diffeomorphisms
can transform the vortex structures is unpredictable. For example, the
diffeomorphism could map a certain vortex line into a curve of length
$10^{-80}$, which one cannot hope to control in a computer-assisted
proof. This difficulty is not merely a mathematical oddity but a
fundamental problem, as it is well known that integral curves and
invariant tori of complicated topology and
arbitrarily small size can bifurcate from vector fields with an extremely
simple structure.

The proof of Theorem~\ref{T1} hinges on choosing an initial datum that
is the sum of several smooth but highly oscillatory fields~$\cW_k$,
that is
\[
u_0=M\, \cW_0+\de_1\, \cW_1+\cdots + \de_n\,\cW_n\,,
\]
and involves an interplay between the (very large) frequencies of the fields and
their relative sizes that ensures that, at time $T_k$, the vortex
structures of the fluid are somehow related to those of~$\cW_k$. Key
to make this argument work is to find two families of vector fields,
which can be conveniently chosen to be Beltrami fields, with
arbitrarily large frequencies and such that in the first family one can
find vortex structures diffeomorphic to those in $\cS_k$ (so this
family is used to construct $\cW_k$ when $k$ is odd) whereas in the
second family all the vortex structures are non-contractible. An
essential property of these families is that they are ``robustly
non-equivalent'', meaning that any (uniformly) small perturbation of a
member of the first family is not topologically equivalent to a small
perturbation of any member of the second family, and viceversa. This
is proved using suitable estimates for Beltrami fields with sharp
dependence on the frequency and KAM-theoretic ideas. It is worth mentioning
that the frequencies we need to consider in the proof of
Theorem~\ref{T1} are much larger
than~$\nu^{-1/2}$, which explains why there is no hope of promoting
this scenario of vortex reconnection to the vanishing viscosity limit.

For the proof of Theorem~\ref{T2} we start with a well chosen initial
condition such that at time~$0$ the torus $\TT^3$ is covered by a
configuration of vortex tubes that is not structurally stable. We then
resort to Melnikov's theory to show that the evolution given by the
Navier--Stokes equations breaks some of these vortex tubes (given by
resonant invariant tori of the vorticity) instantaneously. This
scenario of vortex reconnection does not survive in the vanishing
viscosity limit either (see Remark~\ref{R.nonlinear}).

In both cases we carry
out the computations for initial data of zero mean and arbitrary $L^2$
norm. The global existence of the solutions follows from a
suitable stability theorem for the Navier--Stokes equation and the
fact that our initial data are small perturbations of Beltrami fields
of high frequency and arbitrarily large norm. 

It is worth stressing that, although we have carefully chosen the
initial data not to introduce any inessential technicalities in the
proofs of these results, the underlying ideas are quite flexible and
can be applied to more
general initial data. For instance, Beltrami fields are extremely useful both to make the proof as simple
as possible and to allow us to efficiently deal with vortex structures
of any topology. However, the only part of the argument
where we would not have been able to do without them (at the expense
of losing generality and making the proof more involved) is to show
the global existence of the solutions, which is not essential
for vortex reconnection.

The paper is organized as follows. Building on previous work of two
of the authors~\cite{torus}, in Section~\ref{S.Beltramis} we construct
high-frequency Beltrami fields on the torus with structurally stable
vortex structures of prescribed topology. A key new feature here is
that we derive fine estimates for the norm of these fields in terms
of their frequency. In Section~\ref{S.stability} we establish a
stability theorem for the Navier--Stokes equations with periodic
boundary conditions which yields some of the estimates needed for
Theorem~\ref{T1}. The proof of Theorem~\ref{T1}, which consists of
three main steps, is presented in Section~\ref{S.reconnection}, where
we also discuss a variant of this result in which we prescribe the
Reynolds number of the initial datum instead of its
$L^2$~norm. Section~\ref{S.destruction} is devoted to the proof of
Theorem~\ref{T2} on the instantaneous destruction of vortex tubes. The
paper concludes with some remarks about the role that certain terms
play in the proofs and minor generalizations, which we present in Section~\ref{S.discussion}.

\section{High-frequency Beltrami fields with vortex structures of complex topology}
\label{S.Beltramis}

Our objective in this section is to provide a result on the existence
of high-frequency Beltrami fields with vortex lines and tubes of
prescribed topology, similar to the one proved in~\cite{torus} for the
torus and the 3-sphere. The technical advantage that this result
offers over~\cite{torus} is that here we will control both the norm of the
Beltrami field and the quantitative stability bounds in terms of the
frequency. This is crucial for the proof of Theorem~\ref{T1}.

Let us begin by recalling that a Beltrami field on $\TT^3$ is an eigenfunction of the curl operator:
\begin{equation}\label{Beltrami}
\curl W= N\, W\,.
\end{equation}
We will restrict our attention to Beltrami fields of nonzero
frequency~$ N$, which are necessarily divergence-free and have zero mean:
\[
\int\T W\, dx=0\,.
\]
It is easy to check that the spectrum of curl consists of the points of
the form $ N=\pm|k|$, where $k\in\ZZ^3$ is a 3-vector of integer
components. The most general Beltrami field of frequency~$ N$ is a vector-valued
trigonometric polynomial of the form
\[
W=\sum_{|k|=\pm N}\Big(b_k \, \cos(k\cdot x)+\frac{b_k\times
  k} N\,\sin(k\cdot x)\Big)\,,
\]
where $b_k\in\RR^3$ are vectors orthogonal to~$k$: $k\cdot
b_{k}=0$.

We recall that a set $\cS$ of vortex lines or vortex tubes is {\em
  structurally stable} if they are preserved under
$C^{4,\al}(U)$-small perturbations of the velocity field
modulo a small diffeomorphism of~$\TT^3$ that is close
to the identity in $C^\al(\TT^3)$ (a brief discussion of the H\"older
norms taken to define stability can be found in Section~\ref{S.discussion}). Here $U$ is any fixed open subset
of~$\TT^3$ that contains~$\cS$. With some abuse of notation, we will
often denote by a {\em tube}\/ a toroidal domain in~$\TT^3$ (or its
boundary, which is an embedded torus), and when we say that two tubes
are disjoint (or that a tube is disjoint from a curve) we mean that
the intersection with the closure of the associated domains is empty.

\begin{theorem}\label{T.previo}
Let $\cS$ be a finite union of closed curves and tubes (with pairwise
disjoint closured, but possibly knotted and linked) in
$\TT^3$ that is contained in the unit ball. Then for any large
enough odd integer $ N$ there exists
a Beltrami field~$W$ satisfying~\eqref{Beltrami} and a diffeomorphism $\Phi$ of
$\TT^3$ such that $\Phi(\cS)$ is a union of vortex lines
and vortex tubes of~$W$. This set is contained in the ball of radius
$1/ N$ and structurally stable in the sense that any 
field $W'$ satisfying
\begin{equation}\label{quantstab}
\frac1N\|\curl W-  \curl W'\|_{C^{3,\al}}<\eta
\end{equation}
has a collection of vortex structures given by $\Phi'(\cS)$, where
$\Phi'$ is a diffeomorphism and $\eta$ is a small $N$-independent constant.
Furthermore, the field~$W$ is bounded as
\[
\frac1{C N}< \|W\|_{L^2}< \frac C{\sqrt N}
\]
with $C$ a constant independent of~$ N$.
\end{theorem}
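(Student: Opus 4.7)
The approach is to enhance the construction of~\cite{torus} with sharp frequency-dependent control, both on the $L^2$ norm and on the threshold for structural stability. The plan has three stages: produce a Beltrami field on~$\RR^3$ with the prescribed vortex structures, rescale it so the structures fit in a small ball, and approximate the rescaled field by a periodic Beltrami field on~$\TT^3$ of exact frequency~$N$.

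First, I would invoke the main theorem of~\cite{torus} to obtain a smooth Beltrami field~$w$ on~$\RR^3$ with $\curl w=w$, together with a diffeomorphism of~$\RR^3$ that maps~$\cS$ onto a union of vortex lines and vortex tubes of~$w$ contained inside the ball of radius~$1/2$; this set of structures is stable under uniformly $C^{4,\al}$-small perturbations. Rescaling then yields $w_N(y):=w(Ny)$, which satisfies $\curl w_N=N w_N$ and whose vortex structures now sit inside the ball of radius~$1/(2N)$. The factor~$1/N$ in the stability bound~\eqref{quantstab} is natural here because $\frac{1}{N}\curl W=W$ for a Beltrami field, so the condition compares the renormalized vorticity $\frac{1}{N}\curl W'$ (which, up to reparametrization, generates the vortex structures of~$W'$) directly against~$W$ in~$C^{3,\al}$.

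Second, I would approximate~$w_N$ on the ball of radius~$1/N$ by a Beltrami field~$W$ on~$\TT^3$ of exact frequency~$N$. Every Beltrami field on~$\RR^3$ with eigenvalue~$1$ admits a Herglotz-type representation as an integral of plane-wave Beltrami modes over the unit sphere, so after rescaling $w_N$ becomes an integral of such plane waves of frequency~$N$. On~$\TT^3$ only plane waves with integer wavevectors are available, so I would replace the sphere integral by a weighted sum supported on the lattice points $\{k\in\ZZ^3:|k|=N\}$. Classical equidistribution of these points on the sphere, together with a Taylor expansion of~$w$ at the origin to sufficiently high order, then allows one to solve the resulting linear system for the coefficients~$b_k$ (subject to the orthogonality $b_k\perp k$) so that the field
\[
  W(y)=\sum_{|k|=N}\Bigl(b_k\cos(k\cdot y)+\tfrac{b_k\times k}{N}\sin(k\cdot y)\Bigr)
\]
approximates~$w_N$ in~$C^{4,\al}$ on the ball of radius~$1/N$ to any prescribed accuracy. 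The quantitative stability in~\eqref{quantstab} is then inherited from that of~$w$ through this approximation and the rescaling. The hardest step is precisely here: converting the formal Riemann-sum approximation into a quantitative $C^{4,\al}$ bound on a ball whose radius matches the wavelength of~$W$, all while respecting the linear constraint $b_k\perp k$ and tracking every factor of~$N$ that arises through the rescaling.

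Third, the norm estimates come directly from the Fourier expansion. Parseval together with the orthogonality $b_k\perp k$ gives $\|W\|_{L^2}^2\sim\sum_{|k|=N}|b_k|^2$, while the approximation procedure forces each $|b_k|$ to be of order $1/r_3(N^2)$, with $r_3(N^2)$ denoting the number of integer points on the sphere of radius~$N$. Consequently $\|W\|_{L^2}\sim 1/\sqrt{r_3(N^2)}$, and the bounds $1/(CN)<\|W\|_{L^2}<C/\sqrt{N}$ follow from the classical two-sided estimates $cN\le r_3(N^2)\le CN^2$ valid for all sufficiently large odd~$N$, the lower bound being a consequence of Siegel/Linnik-type estimates for sums of three squares and the upper bound being elementary.
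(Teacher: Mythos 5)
Your proposal follows essentially the same route as the paper: start from a Beltrami field on $\RR^3$ carrying the prescribed structurally stable vortex structures (the paper takes this from the construction in \cite{Acta} rather than from \cite{torus}, which concerns the torus and the sphere), represent it via Herglotz's theorem as a superposition of plane waves over $\SS^2$, discretize that integral over the equidistributed rational directions of height $N$ (this equidistribution is Duke's theorem, and the two-sided lattice-point count $N/C<|\cN_N|<CN^2$ is exactly what yields your bounds $1/(CN)<\|W\|_{L^2}<C/\sqrt N$ via Parseval), and transfer the structural stability through the rescaling $x\mapsto\la x/N$ by tracking the powers of $N$ in the H\"older seminorms. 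The only place where the paper's execution differs from your sketch is the step you yourself flag as hardest: rather than solving a linear system for the coefficients $b_k$ by Taylor expansion, the paper mollifies the Herglotz density $f$ to a smooth $g$, takes the coefficients to be $g(\xi)$ at the discretization points, upgrades the resulting $C^0$ Riemann-sum approximation to $C^{6,\al}$ by elliptic regularity for the Helmholtz equation, and then enforces the exact Beltrami constraint by applying the spectral projector $\curl(\curl+N)/(2N^2)$, whose error is small in $L^2$ precisely because $f$ already satisfies $i\xi\times f=f$ on the sphere.
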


\begin{proof}
It was proved in~\cite{Acta} that there is a small positive constant
$\la\in (0,1)$ and a Beltrami
field~$ w$ on
$\RR^3$ that satisfies 
\[
\curl  w = \la  w\,,
\]
falls off at infinity as $| w(x)|\leq C/|x|$ and has a
collection $\cS'$ of vortex lines and vortex tubes diffeomorphic to $\cS$
(understood now as a subset of the unit ball $\BB_1$ in~$\RR^3$). The
set $\cS'$ is structurally stable under $C^{4,\al}$-small
perturbations and one can
assume that $\cS'$ is also contained in~$\BB_1$.

In view of the sharp decay of~$w$ at infinity, Herglotz's theorem (see e.g.~\cite[Theorem 7.1.27]{Hormander}) ensures
that $ w$ can be written as
\[
 w(x)=\int_{\SS^2}f(\xi)\, e^{i\la x\cdot\xi}\, d\si(\xi)\,,
\]
where $\SS^2$ is the unit sphere with its canonical measure $d\si$ and
$f$ is a complex-valued function in~$L^2(\SS^2)$. Notice that, as $w$ is real-valued,
one necessarily has that $f(\xi)=\overline{f(-\xi)}$, and moreover
\begin{equation}\label{ixif}
i\xi \times f(\xi)-f(\xi)=0
\end{equation}
in $L^2(\SS^2)$ because $w$ is a Beltrami field.

By density we can take a function $g\in
C^\infty(\SS^2)$ with $\|f-g\|_{L^2(\SS^2)}<\ep$, thereby granting
that the field
\[
w_1(x):=\int_{\SS^2}g(\xi)\, e^{i\la x\cdot\xi}\, d\si(\xi)
\]
satisfies
\[
\|w-w_1\|_{C^0(\RR^3)}<C\ep\,.
\]
Without loss of generality, we can assume that $g$ also satisfies the
condition $g(\xi)=\overline{g(-\xi)}$, which ensures that $w_1$ is real-valued.

For each odd integer $ N$, consider the set
\[
\cN_ N:=\{\xi\in\SS^2\cap \QQ^3:  \text{height}(\xi) = N\}\,,
\]
where we recall that the {\em height}\/ of a rational point $\xi\in\SS^2\cap \QQ^3$ is the
least common denomination of the irreducible fractions defined by the
components of~$\xi$. Notice that if the height of~$\xi$ is~$N$, then
$N\xi\in\ZZ^3$, but that the converse is not necessarily true.

It was  proved in~\cite{Duke} that $\cN_ N$ becomes uniformly
distributed as $ N\to\infty$ through the odd integers, and that the
cardinality of this set satisfies
\begin{equation}\label{boundcN}
\frac N C<|\cN_ N|< C N^2\,.
\end{equation}
As $g$ is smooth, the uniform distribution property implies that, for any
large enough odd integer~$ N$ (depending on~$\ep$), the field
\begin{equation}\label{defw2}
w_2(x):=\frac1{|\cN_ N|}\sum_{\xi\in\cN_ N}g(\xi)\, e^{i\la x\cdot\xi}
\end{equation}
approximates $w_1$ in the ball of radius~2:
\[
\|w_1-w_2\|_{C^0(\BB_2)}<\ep\,.
\]
Notice that $-\xi$ is in $\cN_ N$ whenever $\xi\in\cN_ N$, which
ensures that $w_2$ is real-valued by the symmetry of the function~$g$. 
Since $w$, $w_1$ and $w_2$ satisfy the Helmholtz equation $\De w+\la^2
w=0$, standard elliptic estimates then ensure that
\begin{equation}\label{estimww2}
\|w-w_2\|_{C^{6,\al}(\BB_1)}<C\ep\,.
\end{equation}

Taking into account the expression~\eqref{defw2} for the field~$w_2$,
let us define a real-valued vector field on $\TT^3$ as
\[
\tW(x):=\frac1{|\cN_ N|}\sum_{\xi\in\cN_ N}g(\xi)\, e^{i x\cdot( N\xi)}\,.
\]
To see that the periodic boundary conditions are indeed satisfied,
notice that $ N\xi\in\ZZ^3$ by the definition of the
set~$\cN_ N$. Furthermore, since the uniform distribution of
$\cN_ N$ ensures that
\begin{align*}
\lim_{ N\to\infty}|\cN_ N|\|\tW\|_{L^2}^2=\lim_{ N\to\infty}\frac{(2\pi)^3}{|\cN_ N|}\sum_{\xi\in\cN_ N}|g(\xi)|^2=(2\pi)^3\int_{\SS^2}|g(\xi)|^2\, d\si(\xi)\,,
\end{align*}
the estimate~\eqref{boundcN} implies that
\begin{equation}\label{boundtW}
\frac1{C N}< \|\tW\|_{L^2}< \frac C{\sqrt N}
\end{equation}

It follows from the bound~\eqref{estimww2} that in the
unit ball one has
\begin{equation}\label{C6}
\bigg\| \tW\Big(\frac{\la} N\cdot\Big)-w\bigg\|_{C^{6,\al}(\BB_1)}<C\ep\,.
\end{equation}
Let us now define the vector field on $\TT^3$
\begin{equation*}
W:=\frac{\curl(\curl+ N)}{2 N^2} \tW\,,
\end{equation*}
which is easily shown to be a real Beltrami field on the torus with frequency~$ N$.
Notice that $W$ is then close to $\tW$ for large~$ N$; indeed, since
$\cN_ N$ becomes uniformly distributed, as $ N\to\infty$ through odd
integers one has
\begin{align*}
\lim_{ N\to\infty} |\cN_ N|\|\tW-W\|_{L^2}^2&=\lim_{ N\to\infty}
\frac{(2\pi)^3}{|\cN_ N|}\sum_{\xi\in \cN_ N} \bigg|\frac{i\xi\times
  (i\xi\times g(\xi)+ g(\xi))}{2}-g(\xi)\bigg|^2\\
&=(2\pi)^3\int_{\SS^2}\bigg|\frac{i\xi\times
  (i\xi\times g(\xi)+ g(\xi))}{2}-g(\xi)\bigg|^2\, d\si(\xi)\\
&\leq (2\pi)^3\int_{\SS^2}\bigg|\frac{i\xi\times
  (i\xi\times f(\xi)+ f(\xi))}{2}-f(\xi)\bigg|^2\, d\si(\xi)+
C\ep^2\\
&=C\ep^2\,.
\end{align*}
Here we have used the identity~\eqref{ixif} and the fact that
$\|f-g\|_{L^2(\SS^2)}<\ep$. Notice that, by~\eqref{boundtW}, the above estimate readily implies that
\[
\frac1{C N}< \|W\|_{L^2}< \frac C{\sqrt N}\,,
\]
as well as the bound
\begin{equation*}
\|W-\tW\|_{L^2}<\frac{C\ep}{\sqrt N}\,,
\end{equation*}
where we have used~\eqref{boundcN}. Notice that, by the definition
of~$W$ and the bound~\eqref{C6}, 
\begin{equation}\label{Ww}
\bigg\| W\Big(\frac{\la} N\cdot\Big)-w\bigg\|_{C^{4,\al}(\BB_1)}\leq C \bigg\| \tW\Big(\frac{\la} N\cdot\Big)-w\bigg\|_{C^{6,\al}(\BB_1)}<C\ep\,.
\end{equation}
The first part of the theorem then follows easily from the structural stability of
$\cS'$. 

To complete the proof of the theorem, for convenience let us introduce a variable $y$
that takes values in $\BB_1$ and set $x:=\la y/N$, which will
eventually be interpreted as the original variables on~$\TT^3$. Subscripts~$x$
or~$y$ will denote that the various norms are computed with respect to
that variable and the curl operator will always be defined using the
variable~$x$. 

The structural stability of the set~$\cS'$ of
vortex lines and tubes of~$w$ implies that if
\begin{equation}\label{W'w}
\bigg\|\frac1N \curl W'- w\bigg\|_{C^{3,\al}_y(\BB_1)}<2\eta\,,
\end{equation}
with $\eta$ a certain constant that does not depend on~$N$, then
$\curl W'$ has a set of vortex lines and tubes diffeomorphic to~$\cS$
and contained in the region $y\in\BB_1$. Taking the curl and small~$\ep$, a short
computation shows that the inequality~\eqref{Ww} implies that
\[
\bigg\|\frac1N \curl W- w\bigg\|_{C^{3,\al}_y(\BB_1)}\leq \frac C\la \| W-w\|_{C^{4,\al}_y(\BB_1)}
\]
can be taken smaller than~$\eta$.  Hence the 
inequality~\eqref{W'w} will automatically hold provided that we have chosen~$\ep$
small enough and
\[
\frac1N\|\curl W-\curl W'\|_{C^{3,\al}_y(\BB_1)}<\eta\,.
\]
Using now that
\begin{align*}
\frac1N\|\curl W-\curl
W'\|_{C^{3,\al}_y(\BB_1)}&=\frac1N\sum_{j=0}^3\|\nabla_y^j(\curl W-\curl
W')\|_{C^0_y(\BB_1)}\\
&\qquad \qquad \qquad\qquad+\frac1N [\nabla_y^3(\curl W-\curl
W')]_{\al,y,\BB_1}\\
&=\sum_{j=0}^3\la^jN^{-1-j}\|\nabla_x^j(\curl W-\curl
W')\|_{C^0_x(\BB_{\la/N})}\\
&\qquad \qquad  +\la^{3+\al}N^{-4-\al} [\nabla_x^3(\curl W-\curl
W')]_{\al,x,\BB_{\la/N}}\\
&\leq \frac1N\|\curl W-\curl
W'\|_{C^{3,\al}_x(\BB_{\la/N})}\\
&\leq \frac1N\|\curl W-\curl
W'\|_{C^{3,\al}_x(\TT^3)}
\end{align*}
with $0<\la<1$, where $[\cdot]_{\al,y,\BB_1}$ denotes the H\"older seminorm of
exponent~$\al$ computed with respect to the variable~$y$ in the domain~$\BB_1$, the theorem follows.
\end{proof}

\section{A stability result for the Navier--Stokes equations}
\label{S.stability}

In the proofs of Theorems~\ref{T1} and~\ref{T2} we will need to estimate a solution~$u(x,t)$ to the Navier--Stokes
equations on~$\TT^3$ whose initial datum $u_0$ is a small perturbation
of a Beltrami field~$W$ with frequency~$ N$. Our goal in this section
is to provide a stability result for the Navier--Stokes equations that
is very well suited for this task. 

We will state this stability
result in terms of perturbations of a solution $w(x,t)$ to the
Navier--Stokes equations that is in
$L^2(\RR^+,W^{r,\infty}(\TT^3))$ with $r\geq1$. Notice that, if $W$ is a Beltrami
field with frequency~$N$, then the solution
with initial datum $w(\cdot,0)=W$ is
\[
w(x,t):=e^{-\nu  N^2 t}\, W(x)\,,
\]
so the result obviously applies when the initial datum is a Beltrami
field. 

As a further simplification, we will assume that the initial datum has
zero mean:
\begin{equation*}
\int\T u_0\, dx=0\,.
\end{equation*}
This will always be the case if the initial datum is a linear
combination of Beltrami fields. As the average velocity is a conserved quantity of the
Navier--Stokes equations, it will then follow that
\[
\int\T u(x,t)\, dx=0
\]
for all~$t$.

Let us introduce some notation that we
will use in the rest of the paper. Since we will only need to deal with symmetric
tensors, we shall denote by $\otimes$ the {\em symmetric tensor product}\/,
namely 
\[
(v\otimes w)_{ij} := \frac12 ( v_{i}w_{j} + w_{i}v_{j} )\,.
\]
We will also use the shorthand notation
\[
|\nabla^m w|^2:=\sum_{|\al|=m}|\pd^\al w|^2\,.
\]
In particular, for a time-dependent vector field $w(x,t)$ one can write
\[
\|w\|_{H^r}^2:=\sum_{m=0}^r\int\T |\nabla^m w|^2\, dx\,,\qquad
\|w\|_{L^2W^{r,\infty}}^2:=\sum_{m=0}^r\int_0^\infty\|\nabla^mw(\cdot,t)\|_{L^\infty}^2\,dt\,.
\]

It is worth stressing that, while the existence of stability
theorems for the Navier--Stokes equations is not surprising  (see
e.g.~\cite{Titi} for a similar result ensuring the stability of
solutions on~$\RR^3$ that belong to the space
$L^4(\RR^+,H^1(\RR^3))$, the specific dependence of our bounds
on the various norms of the unperturbed solution is key in the proof
of Theorem~\ref{T1}. In particular, it is essential to ensure that our
bounds do not depend exponentially on higher norms of the unperturbed
solution~$w\in L^2 W^{r,\infty}$, but only on its $L^2L^\infty$ norm. To state the theorem in a form that will be particularly useful later on, we find it convenient to recursively define the
quantities $Q^w_r(t)$ associated with a vector field $w(x,t)$ as
\begin{equation}\label{Def:Recurs}
Q^w_{0}(t) := 1
\qquad
Q^w_{r}(t) := 1+   \sum_{m=0}^{r-1} Q^w_m(t) 
\int_{0}^{t} \| \nabla^{r-m} w (\cdot,\tau) \|^{2}_{L^{\infty}} \, d\tau \,.   
\end{equation}

\begin{theorem}\label{T.stability}
  Given some $r\geq1$ and any $\si<1$, let $w$ be a global solution to the
  Navier--Stokes equations in $L^2(\RR^+,W^{r,\infty}(\TT^3))$, with initial datum $w_0:=w(\cdot,0)$ of
  zero mean. Then there is a positive
constant $C$, which does not depend on~$w$, such that for any divergence-free 
initial datum $u_0$ with zero mean and
\begin{equation}\label{boundu0w0}
\|u_0-w_0\|_{H^r}<\frac1C\, Q_r^w(\infty)^{-\frac12}\, e^{-C\|w\|_{L^2L^\infty}^2}\,,
\end{equation}
the corresponding solution $u(x,t)$ to the Navier--Stokes equations is
global and satisfies
\[
\|u(\cdot,t)-w(\cdot,t)\|_{H^m}\leq C\,
Q_m^w(t)^{\frac12}\,
e^{C\|w\|_{L^2L^\infty}^2}\|u_0-w_0\|_{H^m}\, e^{-\nu\si t}
\]
for all $0\leq m\leq r$ and $t>0$, with a $\si$-dependent constant.
\end{theorem}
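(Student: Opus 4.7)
The plan is to study the difference $v := u - w$, which is divergence-free, of zero mean, and satisfies
\begin{equation*}
\pd_t v - \nu\De v + (u\cdot\nabla)v + (v\cdot\nabla)w = -\nabla q, \qquad \Div v = 0,
\end{equation*}
with $v(\cdot,0) = u_0 - w_0$. I would argue by induction on $m$ from $0$ to~$r$: at each step derive the stated $H^m$ bound on any time interval on which $\|v(\cdot,t)\|_{H^m}$ stays below a fixed $O(1)$ threshold (vacuous for $m=0$), and close the bootstrap at level~$r$ using the smallness hypothesis~\eqref{boundu0w0} so the bound is global.

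For the base case $m=0$ I would contract the equation with $v$ and integrate over~$\TT^3$. The transport piece $\int (u\cdot\nabla)v\cdot v$ and the pressure gradient vanish by $\Div u=\Div v=0$, and an integration by parts rewrites the cross term as $\int(v\cdot\nabla)v\cdot w$, which is bounded by $\|w\|_{L^\infty}\|v\|_{L^2}\|\nabla v\|_{L^2}$. Young's inequality absorbs half of the dissipation, and the zero-mean Poincar\'e inequality on~$\TT^3$ converts what remains into linear decay, yielding
\begin{equation*}
\frac{d}{dt}\|v\|_{L^2}^2 + \nu\|v\|_{L^2}^2 \leq \tfrac{C}{\nu}\|w\|_{L^\infty}^2\|v\|_{L^2}^2.
\end{equation*}
Gronwall together with the split $-\nu t = -\nu\si t - \nu(1-\si)t$ then produces the stated bound with $Q_0^w\equiv1$, the $-\nu(1-\si)t$ summand absorbing $\frac{C}{\nu}\|w\|_{L^2L^\infty}^2$ at the price of a $\si$-dependent constant.

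For the inductive step at level $m\leq r$, I would differentiate the equation and contract with $\nabla^m v$. Again $(u\cdot\nabla)\nabla^m v\cdot\nabla^m v$ vanishes, leaving commutator terms together with $\nabla^m[(v\cdot\nabla)w]$, which by Leibniz decompose as sums $\int \nabla^a z_1\cdot\nabla^b z_2\cdot\nabla^m v\,dx$ with $z_i\in\{w,v\}$ and $a+b\leq m+1$. In each such term I would place the lowest-order factor in $L^\infty$ (directly when that factor is a derivative of~$w$; via Sobolev embedding when both factors come from~$v$, in which case the contribution is absorbable under the bootstrap on $\|v\|_{H^m}$), interpolate the other factor by Gagliardo--Nirenberg, and use Young to absorb its gradient into the dissipation, arriving at
\begin{equation*}
\frac{d}{dt}\|\nabla^m v\|_{L^2}^2 + \nu\|\nabla^{m+1}v\|_{L^2}^2 \leq \frac{C}{\nu}\sum_{k=1}^{m}\|\nabla^k w\|_{L^\infty}^2\,\|v\|_{H^{m-k}}^2 + \frac{C}{\nu}\|w\|_{L^\infty}^2\|v\|_{H^m}^2.
\end{equation*}
Substituting the inductive bounds for $\|v\|_{H^{m-k}}^2$, the factors $e^{C\|w\|_{L^2L^\infty}^2}\|u_0-w_0\|_{H^m}^2 e^{-2\nu\si\tau}$ pull out of the time integrals and what remains recombines, via the recursion~\eqref{Def:Recurs}, precisely into $Q_m^w(t)$. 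A second Gronwall step, again using Poincar\'e and the $\si$-splitting, closes the induction.

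The main obstacle is combinatorial bookkeeping: one must verify that the Leibniz pieces assembled at each level really do match the single recursion for $Q_m^w$, without leaving a rogue term in which high derivatives fall simultaneously on~$v$ and on~$w$. The remedy is to systematically keep the top-derivative factor of~$v$ in $L^2$, absorb its gradient into the dissipation, and let everything else contribute only to the $e^{C\|w\|_{L^2L^\infty}^2}$ prefactor; this is the one place where the statement's insistence that \emph{only} the $L^2L^\infty$ norm of~$w$ appear (and not higher norms) becomes delicate. Global existence of~$u$ then follows from a standard continuation argument: on any maximal interval where the bootstrap holds, the smallness in~\eqref{boundu0w0} keeps $\|v(\cdot,t)\|_{H^r}$ strictly below the threshold, so the maximal existence time for~$u$ in $H^r$ is $+\infty$.
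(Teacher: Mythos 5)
Your proposal follows essentially the same route as the paper's proof: energy estimates for $v=u-w$ order by order, with the zero-mean Poincar\'e inequality supplying the exponential decay, Gagliardo--Nirenberg plus a bootstrap at the top level $r$ absorbing the quadratic-in-$v$ terms, and Gr\"onwall reproducing exactly the recursion~\eqref{Def:Recurs} defining $Q_m^w$. The only point to tighten is the decay rate: absorbing \emph{half} of the dissipation in Young's inequality yields only $e^{-\nu t/2}$ decay for $\|v\|_{L^2}$, so to reach an arbitrary $\si<1$ you must, as the paper does, absorb only a $(1-\si)$-fraction of the dissipation, at the price of the $\si$-dependent constant already allowed in the statement.
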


\begin{proof}
Denoting by $P_w$ the pressure function of $w$, it is readily checked that $u$ is a solution of the
Navier--Stokes equations with data $u_{0}$ and pressure $P$ 
if and only if the difference $v:=u-w$ satisfies the equation
\begin{equation}\label{eqv}
\partial_{t} v + \Div( v \otimes  v + 
 2v \otimes w)
-   \nu \Delta  v  =  -  \nabla P_{ v}   \,,\quad \Div  v  =  0\,,\quad  v(\cdot,0) =   v_{0}\,.
\end{equation}
Here $P_{ v} := P -P_w$ and $v_0:=u_0-w_0$. It is
standard that for any $ v_{0} \in H^{r}$ with $r \geq 1$
there exists a local in time solution $v \in
L\loc^2([0,T),H^{r+1}(\TT^3))$, which is a continuous function of time
with values in $H^r(\TT^3)$, provided that $T$ is small enough. Our goal is to show that the
solution is actually global, and this will follow provided that we
show that one can control the $H^r$~norm of~$v$.

To this end we will consider the evolution in time of the energies
\begin{equation}\label{defhm}
h_m(t):=\sum_{j=0}^m\int\T |\nabla^j v(x,t)|^2\, dx
\end{equation}
with $m\leq r$ and show that these quantities do not blow up for any
finite~$t$ (and, in particular, are bounded at time~$T$). In the case $m=0$, it is enough to multiply Equation~\eqref{eqv} by $v$ and integrate by parts to obtain
\begin{align*}
\frac12\frac {dh_0}{dt}&=-\nu\int\T|\nabla v|^2\,dx-\int\T v_i\, v_j\,
                         \pd_jw_i\, dx\\
&=-\nu\int\T|\nabla v|^2\,dx+\int\T w_i \, v_j\, \pd_jv_i\, dx\\
&\leq -\nu\int\T|\nabla v|^2\,dx+\|w\|_{L^\infty}\|v\|_{L^2}\|\nabla
  v\|_{L^2}\\
&\leq -\nu\si \|\nabla v\|_{L^2}^2+C\|w\|_{L^\infty}^2 h_0\\
&\leq (-\nu\si+C\|w\|_{L^\infty}^2)h_0 \,,
\end{align*}
which yields
\begin{equation}\label{h0}
h_0(t)\leq \|v_0\|_{L^2}^2\, e^{-2\nu \si
  t+C\int_0^t\|w(\cdot,\tau)\|_{L^\infty}^2\, d\tau}\,.
\end{equation}
Here $\si$ is any fixed number in the interval $(0,1)$ and we have used that, as the mean of~$v$ is a conserved quantity for
Equation~\eqref{eqv}, $v(\cdot,t)$ has zero
mean for all~$t$, so the Poincar\'e inequality on the torus ensures
\begin{align*}
\|\nabla v\|_{L^2}\geq \|v\|_{L^2}\,.
\end{align*}

The estimate for $h_m$ with~$m\geq 1$ can be proved by
induction.  In view of the estimate~\eqref{h0}, let us make the induction
hypothesis that for any $0\leq m\leq r-1$ the function $h_m$ is
bounded as
\begin{equation}\label{hm}
h_m(t)\leq 
C \,
Q^w_{m}(t) \, e^{C \int_{0}^{t}\|w(\cdot,\tau)\|_{L^\infty}^2 d\tau }   \|v_0\|^{2}_{H^m}\, e^{-2\nu \si t} \,.
\end{equation}
It is now enough to prove the bound for $h_r$. For this, let us commute the
equation for~$v$ with the spatial derivative $ \pd^\al $, with a
multiindex of order $|\al|\leq r$, to find
\begin{equation*}
\pd_t\pd^\al v-\nu\De\pd^\al v+\sum_{\be\leq\al} \binom\al\be 
   \Big[
 \big( ( \partial^{\beta}  v + \partial^{\beta} w) \cdot \nabla \big) \partial^{\alpha - \beta}  v 
 + 
 ( \partial^{\beta}  v \cdot \nabla ) \partial^{\alpha - \beta} w  
 \Big]
   = - \nabla \partial^{\alpha} P_v \, ,
\end{equation*}
where as is customary the condition $\beta \leq \alpha$ and the
combinatorial numbers should be understood
componentwise. Multiplying this equation by $ \pd^\al v$, integrating
by parts and using the Cauchy--Schwartz inequality and the fact that
$|\al|\leq r$
one then infers
\begin{multline}\nonumber
\frac12\frac d{dt}\int_{\TT^3}  |\partial^{\alpha}  v|^{2} +  \nu   \int_{\TT^{3}}  | \nabla \partial^{\alpha} v|^{2}
\leq 
   \ep  \int_{\TT^{3}}  | \nabla \partial^{\alpha} v |^{2}
      +  C 
 \int_{\TT^{3}}
  |w |^{2}  |\pd^{\alpha} v |^{2}
  \\
+ C 
   \sum_{m=0}^{r-1} 
  \int_{\TT^{3}}
  |\nabla^{m} v |^{2}  |\nabla^{r -m} v |^{2}
   +   
  C   \sum_{m=1}^r
 \int_{\TT^{3}}
  |\nabla^{m} w |^{2}  |\nabla^{r -m} v |^{2}
 \end{multline}
where $\ep$ is a small positive constant and the constant~$C$ 
depends on~$\ep$, $\nu$ and~$r$.

Using now the fact that $v(\cdot,t)$ has zero
mean for all~$t$ and the
Gagliardo--Nirenberg inequality for zero-mean fields
\[
\|\nabla^m v\|_{L^\infty}\leq C\|\nabla^{m+2}
v\|_{L^2}^{1/2}\|\nabla^m v\|_{L^6}^{1/2}\leq C\|\nabla v\|_{H^{m+1}}^{1/2}\|v\|_{H^{m +1}}^{1/2}
\]
for $0\leq m\leq r-1$, we obtain, recalling the
definition~\eqref{defhm} of the energies~$h_m$, that
\begin{align*}
 \frac12\frac d{dt}  \int_{\TT^3}  |\partial^{\alpha}  v|^{2}  &+ (\nu -\ep)  \int_{\TT^3}  | \nabla \partial^{\alpha}  v|^{2}  
\leq  
C \|\nabla v\|_{H^{r}}h_{r}^{3/2}
+
C \|w\|^{2}_{L^{\infty}} h_{r}
\\
&\qquad \qquad \qquad \qquad \qquad \qquad\qquad+
C   \sum_{m=1}^r   \| \nabla^m w \|^{2}_{L^{\infty}}   \|v \|_{H^{r - m}}^{2}
\\ &
\leq 
\ep\|\nabla v\|_{H^{r}}^2+C h_{r}^{3}+
C \|w\|^{2}_{L^{\infty}} h_{r}
+C    \sum_{m=1}^r  \| \nabla^m w  \|^{2}_{L^{\infty}}  h_{r - m}\,.
\end{align*}
These inequalities are true for all times up to the maximal time of
existence of the solution. Summing them over all multiindices $\alpha$ such that $|\al|\leq r$
and using that
\[
\|\nabla v\|_{H^r}\geq \|v\|_{H^r}
\]
by the Poincar\'e inequality on the torus (again exploiting that $v(\cdot,t)$ has zero
mean), one then infers
\begin{align*}
\frac{d h_r}{dt}&\leq -2( \nu -c \ep )\|\nabla v\|^{2}_{H^r}+C h_r^3+ C \|w\|^{2}_{L^{\infty}} h_r 
+C  \sum_{m=0}^{r-1}  \| \nabla^{r-m} w  \|^{2}_{L^{\infty}}  h_m
\\
&\leq -2(\nu-c \ep)h_r+C h_r^3+ C \|w\|^{2}_{L^{\infty}} h_r
+C   \sum_{m=0}^{r-1}  \| \nabla^{r-m} w  \|^{2}_{L^{\infty}}  h_m\,,
\end{align*}
where the constant $c$ depends on~$r$ but not on~$\ep$.

As long as 
\begin{equation}\label{IsSat}
h_r^{2}(t) \leq \de
\end{equation}
for some small constant $\de$ that depends on~$\ep$ and~$\nu$ one clearly has
\[
-2(\nu-c \ep)h_r(t)+C h_r^3(t)
\leq -2\nu\si h_r(t) \,,
\]
Hence, recalling the induction hypothesis (\ref{hm}) we deduce that as
long as~\eqref{IsSat} holds one has
\begin{multline*}
\frac{d h_r}{dt} 
\leq 
   -2\nu \si h_r  + C \|w\|^{2}_{L^{\infty}} h_r +
\\  
 +
Ce^{-2\nu\si t  + C \int_0^t\|w(\cdot,\tau)\|_{L^{\infty}}^2  d\tau} \,
\sum_{m=0 }^{r-1} \| \nabla^{r-m} w  \|^{2}_{L^{\infty}} 
     Q^w_{m}(t) h_m(0) \,.
\end{multline*}
Thus using the Gr\"onwall inequality and recalling the definition
given in~\eqref{Def:Recurs} of $Q^w_r(t)$ (which implies, in
particular, that these quantities are non-decreasing), we arrive at
\begin{align*}
h_r(t) 
&
\leq 
Ce^{-2 \nu\sigma t  + C \int_0^t\|w(\cdot,\tau)\|_{L^{\infty}}^2 d\tau} \,
\!\Bigg( \! 1 \! +
  \! \sum_{m=0}^{r-1 } 
   Q_m^w(t) \!  \int_{0}^{t}  \| \nabla^{r-m} w(\cdot,\tau)  \|^{2}_{L^{\infty}}  d\tau 
      \Bigg)  h_{r}(0)  
\\ \nonumber
&
=C
e^{-2 \nu\sigma t  + C \int_0^t\|w(\cdot,\tau)\|_{L^{\infty}}^2 d\tau} \,
   Q^w_{r}(t) \,  h_{r}(0)      \,.
\end{align*}
In particular, the smallness assumption (\ref{IsSat}) is satisfied
provided that the inequality~\eqref{boundu0w0} is satisfied for some large
enough, $\ep$-dependent constant~$C$. Hence the solution exists for
all positive times and the theorem follows from
the bound for $h_r(t)$.
\end{proof}

We shall next state as a corollary a concrete instance of the theorem
that is precisely what we will need to apply later on. For the ease of
notation, here and in what follows let us agree to say that two
quantities $q$ and~$q'$ satisfy the condition
\[
q\ll q'
\]
if $q< \frac1Cq'$ for some large but fixed constant that does not depend on any
relevant parameters. With this notation, the application of
Theorem~\ref{T.stability} that we will actually use is as follows:

\begin{corollary}\label{C.stability}
Given $r\geq1$ and any $\si\in[0,1)$, let $w(x,t)$ satisfy the hypotheses of Theorem~\ref{T.stability} with
\[
\|w\|_{L^2W^{m,\infty}}< C(1+ N^{m-1})\,,
\]
where $ N$ is a large constant, $0\leq m\leq r$ and $C$ is a constant that does not depend
on~$ N$. Then if
\[
\|u_0-w_0\|_{H^r}\ll  N^{1-r}\,,
\]
then the solution to the Navier--Stokes equations with initial datum
$u_0$ is globally defined and satisfies, for all $0\leq m\leq r$,
\[
\|u(\cdot,t)-w(\cdot,t)\|_{H^m}\leq C( N^{m-1}+1)\e^{-\nu\si t}\|u_0-w_0\|_{H^m}\,.
\]
\end{corollary}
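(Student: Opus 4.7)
This is an application of Theorem \ref{T.stability}, so the task reduces to quantifying, in terms of $N$, the two objects appearing in the statement of that theorem: the exponential factor $e^{C\|w\|_{L^2L^\infty}^2}$ and the sequence $Q^w_m(t)$ defined in (\ref{Def:Recurs}). The first is immediate from the hypothesis applied with $m=0$, which gives $\|w\|_{L^2L^\infty} \leq C(1+N^{-1}) \leq 2C$, so this factor is an $N$-independent constant that can be absorbed into the other constants.

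For the second, the plan is an induction on $m$ showing that $Q^w_m(t) \leq C(1+N^{m-1})^2$ uniformly in $t \in [0,\infty)$. The base case $Q^w_0 \equiv 1$ is trivial. For the inductive step, I would insert the hypothesis $\int_0^t \|\nabla^k w\|_{L^\infty}^2 \, d\tau \leq C(1+N^{k-1})^2$ together with the inductive bound on $Q^w_m$ for $m<r$ into the recursive definition, obtaining
\[
Q^w_r(t) \leq 1 + C(1+N^{r-1})^2 + C\sum_{m=1}^{r-1}(1+N^{m-1})^2(1+N^{r-m-1})^2,
\]
where the $m=0$ contribution has been separated since $Q^w_0 \equiv 1$. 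For each $1\leq m \leq r-1$, the corresponding summand is bounded by $C(1+N^{r-2})^2$, a factor of $N^{-2}$ smaller than the $m=0$ term. Summing therefore yields $Q^w_r(t) \leq C(1+N^{r-1})^2$, as required.

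Once both quantities are under control, the smallness condition (\ref{boundu0w0}) of Theorem \ref{T.stability} reduces exactly to $\|u_0-w_0\|_{H^r} \ll N^{1-r}$, matching the assumption in the corollary, and the conclusion of the theorem becomes the stated estimate after noting that $Q^w_m(t)^{1/2} \leq C(1+N^{m-1})$ is non-decreasing in $t$. The one point that requires a little care is verifying that the cross terms in the induction above do not dominate the pure $m=0$ contribution; otherwise the $N^{r-1}$ scaling would be spoiled. This sharp dependence on $N$ is precisely what the proof of Theorem \ref{T1} will rely on, since there $N$ must be taken arbitrarily large while the size of the perturbation is controlled correspondingly.
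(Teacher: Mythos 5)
Your proposal is correct and follows essentially the same route as the paper: the paper's proof consists precisely of noting that $Q^w_m(t)$ is non-decreasing in $t$ and that the hypothesis on $\|w\|_{L^2W^{m,\infty}}$ forces $Q^w_m(\infty)<C(1+N^{2m-2})$, which is exactly the inductive bound you establish (the paper merely omits the details of the induction and of absorbing the $N$-independent exponential factor).
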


\begin{proof}
It suffices to note that $Q_m^w(t)$ is a non-decreasing function
of~$t$ and that the assumption on $\|w\|_{L^2W^{m,\infty}}$
implies that
\[
Q_m^w(\infty)< C(1+ N^{2m-2})
\]
for all $0\leq m\leq r$.
\end{proof}

\section{Reconnection of vortex tubes}
\label{S.reconnection}

Our goal in this section is to prove Theorem~\ref{T1}, thereby establishing that one can choose a smooth
initial datum of arbitrary $L^2$ norm such that the corresponding
solution to the Navier--Stokes equations features $n$~vortex
reconnection processes at arbitrarily small times $T_1<\cdots
<T_n$. As we will see, the vortex structures that are created and
destroyed can have arbitrarily complicated topologies. 

For the sake of
clarity, we will divide the proof in three steps, where we will
construct a collection of Beltrami fields of arbitrarily high
frequencies that are stably
non-equivalent (Step~1), derive uniform estimates for the linearization of the
Navier--Stokes equations around a high-frequency Beltrami field (Step~2), and
make a clever choice of some free constants that will allow us to
derive the desired result (Step~3). As we will discuss in
Section~\ref{S.discussion}, the mechanism of vortex reconnection that
we have presented here is quite flexible. What is key, however, it to
choose carefully the frequencies and relative sizes of the various
terms that we will use to construct the initial datum: in a way, the
heart of the matter is a delicate interplay between several vector fields, all of
which are of high frequency, combined with a uniform topological
non-equivalence result for certain of these fields.

\subsubsection*{Step 1: Beltrami fields that are stably
  topologically non-equivalent}


In the proof of the theorem we will need to consider two different
families of high-frequency Beltrami fields. The first family of
Beltrami fields, which is not explicit but features robust contractible vortex structures
of complicated topology, is obtained by repeatedly
using Theorem~\ref{T.previo}:



\begin{lemma}\label{L.isotopy}
For every odd integer $1\leq k\leq n$, let $\cS_k$ be any finite collection
of (pairwise disjoint but possibly knotted and linked) closed curves and tubes that is contained in
the unit ball. Then for any large enough odd integers $ N_k$ there
are Beltrami fields $W_k$ on~$\TT^3$ with the following properties:
\begin{enumerate}
\item $\curl W_k= N_k\, W_k$.
\item $W_k$ has a collection of vortex lines and vortex tubes that is
  diffeomorphic to $\cS_k$, structurally stable (in the sense of~\eqref{quantstab}) and contained in the
  ball $\BB_{1/ N_k}$ of radius $1/ N_k$.
\item For any nonnegative integer $m$, the $H^m$ norm of $W_k$ satisfies
\[
\frac{ N_k^{m-1}}C<\|W_k\|_{H^m}<C N_k^{m-\frac12}
\]
with a constant $C$ that depends on~$m$ but not on~$ N_k$.
\end{enumerate}
\end{lemma}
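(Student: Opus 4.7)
The plan is to apply Theorem~\ref{T.previo} once to each collection $\cS_k$ and then upgrade the $L^2$ bound produced by that theorem to the $H^m$ bound in~(iii), exploiting the elementary fact that every Beltrami field of frequency $N$ on $\TT^3$ is an eigenfunction of $-\De$ with eigenvalue $N^2$. Concretely, for each odd $k\in[1,n]$ I would apply Theorem~\ref{T.previo} with $\cS:=\cS_k$: for every large enough odd integer $N_k$ this supplies a Beltrami field $W_k$ on $\TT^3$ satisfying $\curl W_k = N_k W_k$ together with a diffeomorphism of $\TT^3$ carrying $\cS_k$ onto a structurally stable collection (in the sense of~\eqref{quantstab}) of vortex lines and tubes of $W_k$ contained in $\BB_{1/N_k}$. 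This accounts for items~(i) and~(ii) verbatim, and also delivers the $L^2$ estimate $\frac{1}{CN_k}<\|W_k\|_{L^2}<\frac{C}{\sqrt{N_k}}$ with an $N_k$-independent constant.

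For the $H^m$ bound in~(iii), first note that $\Div W_k = N_k^{-1}\Div\curl W_k = 0$, so the identity $-\De = \curl\curl-\nabla\Div$ applied to $W_k$ gives $-\De W_k = N_k^2 W_k$. Passing to the Fourier expansion of $W_k$ on $\TT^3$, this forces the Fourier coefficient $\widehat{W}_k(\xi)$ to vanish unless $|\xi|=N_k$, so
\[
\|\nabla^j W_k\|_{L^2}^2 = \sum_{|\xi|=N_k}\bigg(\sum_{|\al|=j}\xi^{2\al}\bigg)\,|\widehat{W}_k(\xi)|^2
\]
and the parenthesized factor is comparable to $|\xi|^{2j}=N_k^{2j}$ with constants depending only on~$j$. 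Summing over $0\leq j\leq m$ then yields $\|W_k\|_{H^m}\sim N_k^m\|W_k\|_{L^2}$ for $N_k\geq 1$, and combining with the $L^2$ estimate above produces exactly the bound $\frac{N_k^{m-1}}{C}<\|W_k\|_{H^m}<CN_k^{m-1/2}$, with $C$ depending on $m$ but not on $N_k$.

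The real substance of the lemma is entirely contained in Theorem~\ref{T.previo}; once that tool is in hand, the remainder is a short Fourier-side computation, so there is no serious obstacle to carrying out the proof.
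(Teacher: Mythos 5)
Your proposal is correct and follows essentially the same route as the paper: the paper likewise deduces the lemma directly from Theorem~\ref{T.previo} and observes that, since a Beltrami field of frequency $N_k$ has Fourier support on $\{|\xi|=N_k\}$, the norm $\|W_k\|_{H^m}$ is equivalent to $N_k^m\|W_k\|_{L^2}$. You have merely spelled out the Fourier-side computation that the paper calls obvious.
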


\begin{proof}
This immediately follows by applying Theorem~\ref{T.previo} to the
sets $\cS_k$. Although in this theorem we had only stated $L^2$
bounds, the $H^m$ bound is immediate because for a Beltrami field the
norm $\|W_k\|_{H^m} $ is obviously equivalent to $N_k^m\, \|W_k\|_{L^2}$.
\end{proof}

The second family of Beltrami fields that we need to consider is given by the fields
\[
B_ N:=(2\pi)^{-3/2}\, (\sin  N x_3,\cos  N x_3,0)\,,
\]
where $ N$ is a positive integer. Notice that $B_ N$ satisfies the equation
\[
\curl B_ N= N\,B_ N
\]
and has been normalized so that $\|B_ N\|_{L^2}=1$.

It is not hard to see that, for any integer~$N$ and any odd integer
$1\leq k\leq n$, the Beltrami fields $W_k$ and $B_N$ are not
topologically equivalent, meaning that there are vortex structures in
$W_k$ that are not homeomorphic to any of the vortex
structures of~$B_N$. The idea here is that all the
vortex structures of $B_N$ are non-contractible, while $W_k$ has a set
of vortex structures diffeomorphic to~$\cS_k$, which is, in
particular, contractible.

The central result of Step~1 is to show that this situation is robust,
meaning that the same property is true for any suitably small
perturbations of these fields. While this can seem pretty obvious at
first sight, the proof is not trivial, as it employs in a key way that
the integral curves of a uniformly (with respect to~$N$) small
perturbation of $B_N$ are confined in narrow regions of $\TT^3$ for
all times. Without this bound, for very large times the perturbation
could get the perturbed integral curve far from the region where the
integral curve of $B_N$ lies and this could translate into the
perturbed field actually having contractible integral curves
diffeomorphic to~$\cS_k$ (for example, the integral curves might wind
around a direction for a long time but then unwind until 
eventually becoming a complicated closed contractible curve). The key bound that prevents this from
happening is obtained from a KAM-type argument applied to a zero-mean
divergence-free field that lives on a three-dimensional space.

We shall next state the ``robust non-equivalence'' result that we will
need to prove the existence of vortex reconnection. To simplify the
statements, with a slight abuse of notation, by a vortex line and a vortex tube
of a general divergence-free field we will respectively mean an integral curve
and a (domain bounded by an) invariant torus of its curl. We also recall that a vortex line or a vortex tube in~$\TT^3$ is {\em contractible}\/
if and only if it is homeomorphic to a curve or tube contained in the
unit ball.

\begin{lemma}\label{L.contractible}
For any positive integer $ N$ and any odd integer $1\leq
k\leq n$, suppose that $W'$ and $B'$ are any vector
fields on $\TT^3$ with 
\[
\|W_k-W'\|_{H^r}+\|B_ N-B'\|_{H^r}\ll  1
\]
for some $r\geq7$, where the implicit constant in this inequality
does not depend on~$N$ or $N_k$. Then: 
\begin{enumerate}
\item $W'$ has a collection of vortex lines and vortex tubes
diffeomorphic to $\cS_k$. 
\item $B'$ does not have any contractible vortex lines or
vortex tubes.
\end{enumerate}
\end{lemma}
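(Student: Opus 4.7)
The two statements are qualitatively different: (i) will follow from the quantitative structural stability built into Theorem~\ref{T.previo}, whereas (ii) is the substantive content and requires a KAM-type confinement argument to forbid long-time ``unwinding'' of the perturbed integral curves, exactly as the discussion preceding the lemma anticipates.

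For (i), the Sobolev embedding $H^r(\TT^3)\hookrightarrow C^{4,\al}(\TT^3)$ (valid because $r\geq 7$ and $\al<\tfrac12$) gives
\[
\frac{1}{N_k}\|\curl W_k-\curl W'\|_{C^{3,\al}}\leq \frac{C}{N_k}\|W_k-W'\|_{H^r}\,.
\]
Since the implicit constant in the hypothesis $\|W_k-W'\|_{H^r}\ll 1$ is independent of $N_k$ and $N_k\geq 1$, this quantity can be made smaller than the threshold $\eta$ supplied by Theorem~\ref{T.previo} by taking the absolute smallness constant small enough. The quantitative stability~\eqref{quantstab} then yields a collection of vortex structures of $W'$ diffeomorphic to~$\cS_k$.

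The plan for (ii) is to rescale and confine. Set
\[
\tilde\xi:=\frac{1}{N}\curl B'=B_N+\frac{1}{N}E\,,\qquad E:=\curl(B'-B_N)\,,
\]
so that the integral curves and invariant tori of $\curl B'$ coincide with those of $\tilde\xi$ up to reparametrisation, while Sobolev embedding yields $\|E\|_{C^4}\leq C\de$ with $\de:=\|B'-B_N\|_{H^r}$ a small absolute constant. The unperturbed field $B_N$ is integrable, with horizontal invariant 2-tori $\{x_3=c\}$ and rotation vector $\om(c):=(2\pi)^{-3/2}(\sin Nc,\cos Nc)$, whose derivative has magnitude $(2\pi)^{-3/2}N$---a strong twist. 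Because $N^{-1}E$ has $C^k$ norm at most $C\de/N$, the perturbation relative to the twist is of order $\de/N^2$, and a quantitative KAM theorem for divergence-free vector fields on~$\TT^3$ with non-degenerate twist (in the style of Cheng--Sun) furnishes a large-measure family of invariant Diophantine tori for $\tilde\xi$, each a Lipschitz graph over some horizontal torus. The main technical obstacle is to carry out this KAM step with the sharper quantitative information that, for $\de$ a small enough absolute constant, the $x_3$-gaps between consecutive persisting tori have width $\ll 1/N$ uniformly in~$N$ (a bound of the shape $C\sqrt{\de}/N^{3/2}$ is consistent with the standard KAM estimates).

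Given this confinement, let $V\subset\TT^3$ be any slab bounded by two consecutive KAM tori, so that $V\subset\{c_1\leq x_3\leq c_2\}$ with $c_2-c_1\ll 1/N$. The near-constancy of $\sin(Nx_3)$ and $\cos(Nx_3)$ on~$V$, combined with $\sin^2+\cos^2\equiv 1$, forces at least one of them---say $\sin(Nx_3)$---to satisfy $|\sin(Nx_3)|>\tfrac12$ with constant sign throughout~$V$. For $\de$ small the first component $\tilde\xi_1=(2\pi)^{-3/2}\sin(Nx_3)+N^{-1}E_1$ then keeps a definite sign on~$V$. For a closed integral curve $\gamma\subset V$ of period~$\tau$, the $x_1$-winding $\tfrac1{2\pi}\int_0^\tau\tilde\xi_1\,dt$ is therefore nonzero, so $\gamma$ is not contractible in~$\TT^3$. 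For a hypothetical contractible invariant torus $\Si\subset V$ bounding a ball $B\subset\TT^3$, pick a lift $\tilde x_1:B\to\RR$ (available because $B$ is simply connected); since $\nabla\tilde x_1=\bee_1\neq 0$, its maximum on $\bar B$ is attained at some $p\in\Si$ where the tangent plane $T_p\Si$ is orthogonal to $\bee_1$, so the tangency $\tilde\xi(p)\in T_p\Si$ forces $\tilde\xi_1(p)=0$, contradicting the sign condition. Since the KAM tori are themselves non-contractible, every vortex line and every vortex tube of $B'$ is either a KAM torus or is trapped in some slab~$V$, and is therefore non-contractible, as required.
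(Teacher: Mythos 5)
Your argument follows essentially the same route as the paper's. Part (i) is handled identically, via the Sobolev embedding $H^r\hookrightarrow C^{4,\al}$ and the quantitative stability~\eqref{quantstab} of Theorem~\ref{T.previo}; part (ii) rests on the same two ideas, namely uniform-in-$N$ KAM confinement of the integral curves of $\frac1N\curl B'$ to horizontal regions of $x_3$-width $o(1/N)$, followed by the observation that on each such region one of the two horizontal components keeps a definite sign, forcing nonzero winding. The one step you explicitly leave open --- producing the invariant tori with gaps $\ll 1/N$ uniformly in $N$ even though $B_N$ has derivatives of size $N$ --- is exactly where the paper does some work: it rescales $y=Nx$, $\si=(2\pi)^{-3/2}Ns$, so the unperturbed field becomes the $N$-independent $(\sin y_3,\cos y_3,0)$ with nondegenerate twist (slope $\tan y_3$ or $\cot y_3$), the perturbation $b=\frac1N\curl(B'-B_N)$ satisfies $\|b\|_{C^{3,\al}}\leq CN^{-1}\|B'-B_N\|_{H^r}$, and the KAM theorem of~\cite{KKP} then yields invariant tori within $C\|b\|_{C^{3,\al}}^{1/2}$ of the horizontal ones in the $y$ variable; since these tori have codimension one, every integral curve obeys $|x_3(s)-x_3^0|\leq C\|b\|_{C^{3,\al}}^{1/2}/N$, which is precisely the bound of the shape $C\sqrt\de\,N^{-3/2}$ you conjectured. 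Two smaller remarks: an embedded $2$-torus does not bound a ball, so your tangency argument should be phrased on the compact region that a contractible invariant torus bounds inside an embedded ball (where the lift of $x_1$ still exists); and the paper avoids a separate discussion of tubes altogether by showing that \emph{every} integral curve, closed or not, winds monotonically in $x_1$ or $x_2$ and hence cannot lie in any embedded ball, which rules out contractible tubes as well since a contractible tube would consist entirely of such curves.
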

\begin{proof}
In fact, and using that $\curl B_N=N B_N$ and $\curl W_k=N_k W_k$, we will prove the result
under the weaker hypothesis that 
\[
\bigg\|
 W_k-\frac1{N_k}\curl W'\bigg\|_{C^{3,\al}}
+ \bigg\|B_ N-\frac1N\curl B'\bigg\|_{C^{3,\al}}\ll1\,.
\]
The proofs of the statements for~$B'$ and $W'$ are logically
independent. The case of~$W'$ can be essentially
read off Theorem~\ref{T.previo} (and Equation~\eqref{quantstab}) using that for $r\geq7$ the $H^r$ norm
controls the $C^{4,\al}$ norm. Specifically, notice that
\begin{multline*}
\bigg\|
 W_k-\frac1{N_k}\curl W'\bigg\|_{C^{3,\al}}=\frac1{N_k}\|\curl
 W_k-\curl W'\|_{C^{3,\al}}\\
\leq \frac C{N_k}\|W_k-W'\|_{C^{4,\al}}\leq
 \frac C{N_k}\|W_k-W'\|_{H^r}\,.
\end{multline*}
Since $N_k\geq1$, the factor $1/N_k$ means that one can even take
$\|W_k-W'\|_{H^r}<\eta N_k$, but we will not need this improvement. 

Let us now focus on the statement for
the field~$B'$. Up to reparametrization, the integral curves of $\curl B'$ coincide
with those of
\[
B'':=\frac 1N\curl B'= B_N+b\,,
\]
with $b:=\frac1N\curl B'-B_N$, so it suffices to prove the result
for~$B''$.  The claim for $B'$ hinges on a uniform KAM theoretic estimate. Essentially, what this
KAM estimate gives us is that if $x(s)$ is an integral curve of~$B''$
whose initial condition $x^0=(x_1^0,x_2^0,x_3^0)$ is such that
$N\, x_3^0\in I_j^n(\frac1{10})$ and $\|b\|_{C^{3,\al}}$ is smaller
than some $N$-independent constant, then $N\, x_3(s)\in I_j^n(\frac15)$
for all times $s$. Here we are using the notation $I_j^n(\de)$ for the
(non-pairwise disjoint)
intervals in the circle $\RR/2\pi N\ZZ$ defined as
\begin{align*}
I_1^n(\de)&:=\Big\{z: z- 2\pi n\in \Big (\!-\frac\pi 4-\de,
\frac\pi 4+\de \Big)\mod 2\pi N \Big\}\,,\\
I_2^n(\de)&:=\Big\{z: z- 2\pi n\in \Big (\frac\pi 4-\de,
\frac{3\pi} 4+\de \Big)\mod 2\pi N \Big\}\,,\\
I_3^n(\de)&:=\Big\{z: z- 2\pi n\in \Big (\frac{3\pi} 4-\de,
\frac{5\pi} 4+\de \Big)\mod 2\pi N \Big\}\,,\\
I_4^n(\de)&:=\Big\{z: z- 2\pi n\in \Big (\frac{5\pi} 4-\de,
\frac{7\pi} 4+\de \Big)\mod 2\pi N \Big\}\,,
\end{align*}
where $0\leq n\leq N-1$ is an integer.
Let us complete the proof of the lemma under the assumption that this
result is true. The proof will be presented later.

Let $x(s)$ be an arbitrary (possibly non-periodic) integral curve of
$B''$. Since for all $s$ the third coordinate is such that $N\,x_3(s)$ lies in an
interval $I_j^n(\frac15)$, it follows that the $i^{\mathrm{th}}$ component of the
field $B_N$ (where $i=1$ if $j$ is even and $i=2$ is $j$ is odd)
evaluated on this integral curve satisfies
\[
|B_{N,i}(x(s))|>c
\]
for all $s$ and some positive constant $c$ that does not depend
on~$N$. This is simply because, on each of these intervals, either the
sine or the cosine are bounded away from~$0$. Notice that obviously
\[
|B_i''(x(s))|> c-\|b\|_{C^0}>\frac c2
\]
provided that $\|b\|_{C^0}$ is small enough. It then follows that, for
the integral curve that we are considering, $x_i(s)$ defines a
map $\RR\to\TT$ whose derivative is bounded away from zero. Hence the
map winds around the circle~$\TT$ either in the positive or the
negative direction for all times, so, in particular, it is not
homotopic to a constant map $\RR\to\TT$. Hence the integral curve
$x(s)$ must be homotopically nontrivial, so it cannot be contractible.

It only remains to prove the auxiliary technical result that an integral curve of $B''$ whose
initial datum has $Nx_3^0$ in $I_j^n(\frac1{10})$, then $N\, x_3(s)\in I_j^n(\frac15)$ for all
times provided that $\|b\|_{C^{3,\al}}$ is small enough. This is
easier to obtain in the rescaled variables $y:=Nx$ and $\si:=(2\pi)^{-3/2}Ns$. To this end, let us start by considering the integral curve~$y(\si)$
of $B_N$ with initial condition $y^0 =(y_1^0,y_2^0,y_3^0)$. Since the
ODE reads as
\[
\frac{dy_1}{d\si}=\sin y_3\,,\qquad \frac{dy_2}{d\si}=\cos y_3\,,\qquad \frac{dy_3}{d\si}=0\,,
\]
the integral curves of $B_N$ are explicitly given by
\[
y_1(\si)=y_1^0+\si\, \sin y_3^0\,,\qquad y_2(\si)=y_2^0+\si\, \cos y_3^0\,,\qquad y_3(\si)=y_3^0\,.
\]
Notice that $y_j$ takes values in $\RR/2\pi N\ZZ$ and that the
regions in $\TT^3$ defined by $\{y:Ny_3\in I^n_j(\frac15)\}$ are invariant
under the flow of the ODE.

It is easy to see that the field $B_N$ satisfies non-degeneracy KAM
conditions in the sense that the slope of the integral curves on each
torus $\{ y: y_3=y_3^0\}$ varies from torus to torus (that is, it is
not a constant function of~$y_3^0$). Indeed, this
slope is given by 
\[
\frac{dy_1/d\si}{dy_2/d\si}=\tan y_3^0
\]
when $y_3^0\in I^n_j(\frac15)$ with $j$ odd and by
\[
\frac{dy_2/d\si}{dy_1/d\si}=\cot y_3^0
\]
when $j$ is even. Since the derivative of the tangent or cotangent
never vanishes (and the function is everywhere defined on these
intervals), the twist condition of the KAM theorem is automatically
satisfied. Hence~\cite{KKP} any divergence-free field $B''=B_N+b$ of
zero mean on $\TT^3$ with $b$ small enough in $C^{3,\al}$ has the
following property: for any $\be\in\TT$, the field $B''$ has an
invariant torus given by 
\[
\Phi(\{x\in\TT^3: x_3=\be\})\,,
\]
where $\Phi$ is a $C^{3,\al}$ diffeomorphism with
$\|\Phi-\id\|_{C^\al}<C\|b\|_{C^{3,\al}}^{1/2}$. Since the invariant
tori have codimension~1, there can be no diffusion, which ensures that
for any integral curve of~$B''$,
\[
|y_3(\si)-y_e^0|=N|x_3(s)-x_3^0|<C\|b\|_{C^{3,\al}}^{1/2}
\]
for all times~$s$. In particular, if the initial datum has $Nx_3^0$ in
$I_j^n(\frac1{10})$, then $N\, x_3(s)\in I_j^n(\frac15)$ for all
times provided that $\|b\|_{C^{3,\al}}\ll1$. The lemma then follows
upon noticing that
\[
\|b\|_{C^{3,\al}}= \frac1N\|\curl B_N-\curl B'\|_{C^{3,\al}}\leq \frac
1N\|B_N-B'\|_{C^{4,\al}}\leq \frac CN\|B_N-B'\|_{H^r}
\]
and that the field $b$ is obviously divergence-free and of
zero mean because it is the curl of another field.
\end{proof}


\subsubsection*{Step 2: Estimates for the solution}

We shall next use the families of Beltrami fields $B_N$, $W_k$ to
construct a suitable initial datum of norm~$M$ for the Navier--Stokes equations
which exhibits vortex reconnection at times $T_1,\dots, T_n$.

For this, let us consider large nonnegative integers $ N_0,\dots, N_n$ with
\[
1\ll N_n\ll N_{n-1}\ll\cdots \ll N_2 \ll \sqrt{N_1}\,,
\]
and small positive real numbers
$\de_1,\dots,\de_n$ with $\de_{j+1}\ll \de_j$. 
Take the solution to the Navier--Stokes equations with
initial condition
\[
u_0:= M\, \cW_0+\sum_{j=1}^n\de_j\, \cW_j\,,
\]
where we are using the notation
\[
\cW_j:=\begin{cases}
B_{ N_j} &\text{if $j$ is even,}\\
W_j &\text{if $j$ is odd}
\end{cases}
\]
and the fields $B_{ N}$ and $W_j$ were defined in Step~1.

Let us define the function
\[
w(x,t):=M\, \cW_0(x)\, e^{-\nu  N_0^2 t}\,,
\]
which satisfies the Navier--Stokes equations on $\TT^3\times\RR^+$ and
the hypotheses of Corollary~\ref{C.stability}. By the
assumptions on $\de_j, N_j$ and the bounds for
$W_j$ stated in Lemma~\ref{L.isotopy}, the field
\[
v_0:=\sum_{j=1}^n\de_j\, \cW_j
\]
is bounded as
\begin{equation}\label{v0}
\|v_0\|_{H^m}< C\de_1(N_1^{m-\frac12}+1)\,.
\end{equation}
Hence it follows from Corollary~\ref{C.stability} that if
\begin{equation}\label{ineq1}
\de_1 N_1^{r+\frac12}\ll N_0^{-r}
\end{equation}
then the solution~$u$
is globally defined and the difference
\[
v(x,t):=u(x,t)-w(x,t)
\]
is bounded as
\begin{equation}\label{vHm}
\|v(\cdot,t)\|_{H^m}\leq C( N_0^{m-1}+1)\e^{-\nu\si t}\|v_0\|_{H^m}\,,
\end{equation}
for all $0\leq m\leq r+1$, where the implicit constant in this inequality depends on~$M$. Here and in what follows, $r$ is a fixed
integer, which we can assume to be larger than or equal to 7 as in Lemma~\ref{L.contractible}.

We shall need more estimates for the difference $v$. To derive them,
notice that $v$ satisfies the equations
\[
\partial_{t} v + \Div( v \otimes  v + 
 2 v \otimes w) 
-   \nu\, \Delta  v  =  -  \nabla P_{ v}   \,,\quad \Div  v  =  0\,,\quad  v(\cdot,0) =   v_0\,.
\]
It is then standard that the field~$v$ can then be written, using Duhamel's
formula, as
\begin{equation}\label{v(t)}
v(\cdot, t)=e^{\nu t\De}v_0- 2\, \Lin(\cdot,t)-\Bil(\cdot,t)\,,
\end{equation}
where
\begin{equation}\label{v0(t)}
e^{\nu t\De}v_0= \sum_{j=1}^n\de_j\, \cW_j\, e^{-\nu N_j^2 t}\,,
\end{equation}
and the linear and bilinear terms are
\begin{align*}
\Lin(\cdot,t)& := \int_{0}^{t} e^{\nu (t-s)\Delta} \, \PP\, \Div
(v(s) \otimes w(s) )  \, ds \\
\Bil(\cdot,t)& := \int_{0}^{t} e^{\nu (t-s)\Delta} \, \PP\, \Div
(v(s) \otimes v(s) )  \, ds\,.
\end{align*}
Here we are writing $v(t)\equiv v(\cdot,t)$ for the ease of notation and $\PP$ denotes the Leray projector onto divergence-free fields
with zero mean, which is given by the Fourier multiplier 
\[
\widehat{\PP V}(k):=\frac{|k|^2I-k\otimes k}{|k|^2}\widehat V(k)\,,
\]
which is understood as zero on the zero mode.

Equation~\eqref{v(t)} shows that the difference $v(x,t)$ consists of
terms that evolve according to the heat equation and of two terms
whose evolution is more involved. To deal with these terms we will utilize
the following simple bounds for the heat kernel:

\begin{lemma}\label{L.heat}
Let $f$ be a function (or vector field) on $\TT^3$ with zero mean. For any integer $m\geq0$ and $s>0$ one has
\begin{align}\label{heat1}
\|e^{ s\De}f\|_{H^m}&\leq Cs^{-\frac m2}\|f\|_{L^2}\,,\\
\|e^{ s\De} f\|_{H^m}&\leq e^{-s}\|f\|_{H^m}\,. \label{heat2}
\end{align}
\end{lemma}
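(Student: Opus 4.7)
My plan is to diagonalize $e^{s\Delta}$ on $\TT^3$ via Fourier series. Since $f$ has zero mean, $\hat f(0)=0$ and only the modes $k\in\ZZ^3\setminus\{0\}$ contribute. On these modes $\widehat{e^{s\Delta}f}(k)=e^{-s|k|^2}\hat f(k)$, so Plancherel yields
\[
\|e^{s\Delta}f\|_{H^m}^2\asymp\sum_{k\neq 0}(1+|k|^{2m})\,e^{-2s|k|^2}\,|\hat f(k)|^2,
\]
and both inequalities reduce to pointwise estimates for the multiplier $(1+|k|^{2m})e^{-2s|k|^2}$ along the lattice $\ZZ^3\setminus\{0\}$. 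The decisive fact in either case is the spectral gap $|k|\geq 1$ enjoyed by nonzero integer vectors, which is exactly what the zero-mean hypothesis delivers.

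For the smoothing inequality \eqref{heat1} I would establish the uniform bound
\[
\sup_{|k|\geq 1}(1+|k|^{2m})\,e^{-2s|k|^2}\leq C_m\,s^{-m}\qquad (s>0)
\]
and then insert it into the Plancherel identity, using $\sum_{k\neq 0}|\hat f(k)|^2\asymp\|f\|_{L^2}^2$. The supremum bound is an elementary one-variable calculation: on $[1,\infty)$ the function $x\mapsto x^m e^{-2sx}$ peaks either at the interior critical point $x=m/(2s)$ with value $(m/(2se))^m$, or at the boundary $x=1$ (when $s\geq m/2$) with value $e^{-2s}$, and both are $\leq C_m s^{-m}$, the latter because exponentials beat any polynomial for $s\geq 1$. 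The constant contribution $e^{-2s|k|^2}\leq 1$ is absorbed analogously, splitting into $s\leq 1$ (where $1\leq s^{-m}$) and $s\geq 1$ (where $e^{-2s}$ decays super-polynomially).

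For the decay inequality \eqref{heat2} the zero-mean hypothesis is what upgrades algebraic to exponential decay: every $k\in\ZZ^3\setminus\{0\}$ satisfies $|k|^2\geq 1$, so $e^{-2s|k|^2}\leq e^{-2s}$ uniformly in $k$. Factoring this out of the Plancherel sum,
\[
\|e^{s\Delta}f\|_{H^m}^2\leq e^{-2s}\sum_{k\neq 0}(1+|k|^{2m})|\hat f(k)|^2\asymp e^{-2s}\|f\|_{H^m}^2,
\]
and taking square roots finishes the proof. The vector-valued case is identical, applied componentwise. I do not anticipate any real obstacle: this is a standard spectral computation, and the only substantive use of the hypotheses is that zero mean removes the single troublesome mode $k=0$ that would otherwise destroy both estimates.
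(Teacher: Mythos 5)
Your proof is correct and follows essentially the same route as the paper: expand in Fourier series, use the zero-mean hypothesis to discard the $k=0$ mode, bound the multiplier $|k|^{2m}e^{-2s|k|^2}$ by $C_m s^{-m}$ for the smoothing estimate, and use the spectral gap $|k|\geq 1$ to factor out $e^{-2s}$ for the decay estimate. The only cosmetic difference is that you optimize $x^m e^{-2sx}$ by calculus where the paper invokes $e^z\geq z^m/m!$; both are equivalent elementary bounds.
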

\begin{proof}
For the first bound we use that, as $f$ has zero mean,
\begin{align*}
\|e^{ s\De}f\|_{H^m}^2&=\sum_{k\in\ZZ^3\backslash\{0\}}
                        \frac{|k|^{2m}}{e^{s|k|^2}}|\hf(k)|^2
                        \leq Cs^{-m}\sum_{k\in\ZZ^3\backslash\{0\}}|\hf(k)|^2=Cs^{-m}\|f\|_{L^2}^2\,,
\end{align*}
where we have used Parseval's identity and the fact that
$e^z=\sum_{n=0}^\infty \frac{z^n}{n!}\geq \frac{z^m}{m!}$ for any
integer $m$ and any $z>0$. Likewise,
\begin{align*}
\|e^{ s\De}f\|_{H^m}^2&=\sum_{k\in\ZZ^3\backslash\{0\}}
                        \frac{|k|^{2m}}{e^{s|k|^2}}|\hf(k)|^2
                        \leq e^{-s}\sum_{k\in\ZZ^3\backslash\{0\}}|k|^{2m}|\hf(k)|^2=e^{-s}\|f\|_{H^m}^2\,,
\end{align*}
where we have used that $|k|\geq1$ for all $k\in\ZZ^3\backslash\{0\}$.
\end{proof}

In the following lemma we apply these estimates to obtain bounds for
the linear and bilinear terms in the Navier--Stokes evolution with the
right dependence on the free parameters $\de_k$, $N_k$. Here and in
what follows, all the implicit constants in expressions of the form $
N_0\gg1$ and in the various bounds that we derive depend on
the fixed parameters
$\nu,M,T_k$, but not on the free parameters $\de_k$,~$N_k$.

\begin{lemma}\label{L.bound}
For any $t\geq T_1$ and $N_0\gg N_1^{\frac{r-1}2}$, the above linear and bilinear terms are bounded as
\begin{align*}
\|\Lin(\cdot,t)\|_{H^r}&\leq C\de_1 N_0^{-2}  \,, \\
\|\Bil(\cdot,t)\|_{H^r}&\leq C\de_1^2 N_0^{r+1} N_1^{r+2}\,.
\end{align*}
\end{lemma}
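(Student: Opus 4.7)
Both bounds follow from the Duhamel formula by estimating the integrand and integrating in time, using the a priori bounds on $v$ given by Corollary~\ref{C.stability} and~\eqref{v0}. The bilinear bound is a routine Moser estimate, while the linear bound is delicate and exploits the explicit structure of $w(\cdot,s)=Me^{-\nu N_0^2 s}B_{N_0}$: namely the exponential decay in~$s$ and the concentration of $\widehat{B_{N_0}}$ on only the two Fourier modes $\pm N_0\bee_3$.

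\emph{Bilinear term.} Combining the mapping property $\|e^{\nu\tau\De}\PP\Div f\|_{H^r}\leq\|f\|_{H^{r+1}}$ with the Moser inequality (valid since $r+1\geq 2$), the Sobolev embedding $H^2\hookrightarrow L^\infty$, and the a priori bounds $\|v(s)\|_{L^\infty}\leq C\|v(s)\|_{H^2}\leq C\de_1 N_0 N_1^{3/2}$ and $\|v(s)\|_{H^{r+1}}\leq C\de_1 N_0^r N_1^{r+1/2}$ from~\eqref{vHm} and~\eqref{v0}, one obtains
\[
\|v(s)\otimes v(s)\|_{H^{r+1}}\leq C\|v(s)\|_{L^\infty}\|v(s)\|_{H^{r+1}}\leq C\de_1^2 N_0^{r+1} N_1^{r+2}.
\]
Integrating over $s\in[0,t]$ and absorbing the bounded factor $t\leq T_n$ into the constant yields the claimed bound on $\Bil(t)$.

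\emph{Linear term.} I write $\Lin(t)=M\int_0^t e^{-\nu N_0^2 s}G(t,s)\,ds$, where $G(t,s):=e^{\nu(t-s)\De}\PP\Div(v(s)\otimes B_{N_0})$, and split the time integral at $s_*:=t-T_1/2$, which is at least $T_1/2$ since $t\geq T_1$. For $s\in[0,s_*]$ one has $\tau:=t-s\geq T_1/2$; since $\widehat{B_{N_0}}$ is supported at $\pm N_0\bee_3$, Parseval and a change of Fourier variable give
\[
\|G(t,s)\|_{H^r}^2\leq C\sum_{k'}|k'+N_0\bee_3|^{2r+2}e^{-2\nu\tau|k'+N_0\bee_3|^2}|\hv(k',s)|^2+\text{symmetric term}.
\]
The one-variable function $\phi_\tau(x):=x^{2r+2}e^{-2\nu\tau x^2}$ attains its maximum $\bigl((r+1)/(2e\nu\tau)\bigr)^{r+1}$, which for $\tau\geq T_1/2$ is bounded by a constant depending only on $T_1, r, \nu$. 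Hence $\|G(t,s)\|_{H^r}\leq C\|v(s)\|_{L^2}\leq C\de_1$, and the contribution from $[0,s_*]$ is at most
\[
CM\de_1\int_0^{s_*}e^{-\nu N_0^2 s}\,ds\leq \frac{CM\de_1}{\nu N_0^2}\leq C\de_1 N_0^{-2}.
\]
For $s\in[s_*,t]$ one has $e^{-\nu N_0^2 s}\leq e^{-\nu N_0^2 T_1/2}$, and the crude bound $\|G(t,s)\|_{H^r}\leq\|v(s)\otimes B_{N_0}\|_{H^{r+1}}\leq C\de_1 N_0^{r+2}N_1^{r+1/2}$ (from Moser applied to $v\otimes B_{N_0}$ together with $\|B_{N_0}\|_{L^\infty}\leq C$ and $\|B_{N_0}\|_{H^{r+1}}\leq CN_0^{r+1}$) yields only a polynomial factor in $N_0,N_1$, which is crushed below $\de_1 N_0^{-2}$ by the exponential factor for $N_0$ sufficiently large.

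\emph{Main obstacle.} The subtle point is that a naive Duhamel/Moser bound on $\Lin$ produces polynomial losses in $N_0$. The key insight resolving this is the spectral concentration of $B_{N_0}$ at two Fourier modes combined with the fact that, for $t\geq T_1$, most of the time integral enjoys heat smoothing over a time $\tau\geq T_1/2$ independent of the large parameter $N_0$; this yields an estimate on $G(t,s)$ in terms of $\|v(s)\|_{L^2}\leq C\de_1$ alone, and the remaining $e^{-\nu N_0^2 s}$ factor then provides the gain $N_0^{-2}$ upon integration.
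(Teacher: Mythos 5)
Your overall strategy matches the paper's: Duhamel, product estimates via Sobolev embedding for the integrands, and a split of the time integral for the linear term so that on the bulk of the integration range the heat semigroup smooths from $L^2$ to $H^{r+1}$ over a time bounded below by $T_1/2$, after which the factor $e^{-\nu N_0^2 s}$ coming from $\|w(s)\|_{L^\infty}$ delivers the gain $N_0^{-2}$. Your treatment of the linear term is correct; the only remark is that the Fourier-support argument for $B_{N_0}$ is an unnecessary elaboration. The paper gets the same conclusion from the generic smoothing bound $\|e^{\nu\tau\De}f\|_{H^{r+1}}\leq C\tau^{-\frac{r+1}{2}}\|f\|_{L^2}$ (Lemma~\ref{L.heat}) applied to $f=\PP(v(s)\otimes w(s))$ with $\|v(s)\otimes w(s)\|_{L^2}\leq C\|w(s)\|_{L^\infty}\|v(s)\|_{L^2}\leq C\de_1 e^{-\nu N_0^2 s}$; no information about where $\widehat{B_{N_0}}$ is supported is needed, only $\|B_{N_0}\|_{L^\infty}\leq C$. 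So the ``key insight'' you identify is not the spectral concentration but simply parabolic smoothing over a time scale independent of $N_0$ together with the $L^\infty$ decay of $w$.

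There is, however, a genuine gap in your bilinear estimate. The lemma asserts the bound for \emph{all} $t\geq T_1$, with no upper restriction, so you cannot ``absorb the bounded factor $t\leq T_n$'' into the constant: your integrand is bounded by a constant independent of $s$, and integrating it over $[0,t]$ produces a factor $t$ that is unbounded. The repair is to keep the decay you discarded when quoting~\eqref{vHm}: that estimate gives $\|v(s)\|_{H^m}\leq C(N_0^{m-1}+1)e^{-\nu\si s}\|v_0\|_{H^m}$, so in fact
\[
\|v(s)\otimes v(s)\|_{H^{r+1}}\leq C\de_1^2\,N_0^{r+1}N_1^{r+2}\,e^{-2\nu\si s}\,,
\]
and either $\int_0^\infty e^{-2\nu\si s}\,ds<\infty$ directly, or (as the paper does) one combines this with the heat-kernel decay $\|e^{\nu(t-s)\De}f\|_{H^{r+1}}\leq e^{-\nu(t-s)}\|f\|_{H^{r+1}}$ from Lemma~\ref{L.heat} to bound $\int_0^t e^{-\nu(t-s)}e^{-2\nu\si s}\,ds\leq C$ uniformly in $t$. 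The same remark applies, harmlessly, to the tail piece $[s_*,t]$ of your linear estimate, where you already win from $e^{-\nu N_0^2 T_1/2}$.
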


\begin{proof}
Using the bounds~\eqref{v0} and~\eqref{vHm} for $\|v_0\|_{H^m}$ and $\|v\|_{H^m}$, a standard bilinear estimate and the Sobolev embedding, one can readily obtain
\begin{align*}
\|v(s)\otimes v(s)\|_{H^{r+1}}&\leq
C\|v(s)\|_{L^\infty}\|v(s)\|_{H^{r+1}}\leq
C\|v(s)\|_{H^2}\|v(s)\|_{H^{r+1}}\\
&\leq C\de_1^2 N_0^{r+1} N_1^{r+2}\, e^{-2\nu\si s}\,,\\
\|v(s)\otimes w(s)\|_{L^2}&\leq
C\|w(s)\|_{L^\infty}\|v(s)\|_{L^2}\leq C\de_1
 e^{-\nu N_0^2 s}\,,\\[1mm]
\|v(s)\otimes w(s)\|_{H^{r+1}}&\leq
C\|w(s)\|_{L^\infty}\|v(s)\|_{H^{r+1}}+C\|v(s)\|_{L^\infty}\|w(s)\|_{H^{r+1}}\\
&\leq
C\|w(s)\|_{L^\infty}\|v(s)\|_{H^{r+1}}+C\|v(s)\|_{H^2}\|w(s)\|_{H^{r+1}}\\
&\leq
C\de_1( N_0^r N_1^{r+\frac12}+ N_0^{r+2} N_1^{\frac32})\,
e^{-\nu N_0^2s}\\
&\leq
C\de_1 N_0^{r+2} N_1^{\frac32}\, e^{-\nu N_0^2s}\,,
\end{align*}
where in the last bound we are assuming that $N_0\gg N_1^{\frac
  {r-1}2}$.

One can now use these bounds with the heat kernel estimates stated in
Lemma~\ref{L.heat} to estimate the linear term as
\begin{align*}
\|\Lin(\cdot,t)\|_{H^r}&\leq \int_{0}^{t} \|e^{\nu (t-s)\Delta} \, \PP\, \Div
(v(s) \otimes w(s) ) \|_{H^r} \, ds\\
&\leq C\int_{0}^{t/2} \|e^{\nu (t-s)\Delta} \, \PP
(v(s) \otimes w(s) ) \|_{H^{r+1}} \, ds \\
&\qquad \qquad \qquad \qquad\qquad\qquad +  C\int_{t/2}^{t} \|e^{\nu (t-s)\Delta} \, \PP
(v(s) \otimes w(s) ) \|_{H^{r+1}} \, ds\\
&\leq C\int_{0}^{t/2} (t-s)^{-\frac{r+1}2}\|v(s) \otimes w(s)  \|_{L^2} \, ds \\
&\qquad \qquad \qquad \qquad\qquad\qquad+ C\int_{t/2}^{t}
e^{-\nu(t-s)}\|v(s) \otimes w(s)  \|_{H^{r+1}} \, ds\\
&\leq C\de_1
 \int_{0}^{t/2} (t-s)^{-\frac{r+1}2} e^{-\nu N_0^2 s}\, ds \\
&\qquad \qquad \qquad \qquad\qquad\qquad+ C\de_1 N_0^{r+2} N_1^{\frac32}\,\int_{t/2}^{t} e^{-\nu(t-s)}
e^{-\nu N_0^2s}\, ds\\
&\leq C\de_1 N_0^{-2}
 + C\de_1 N_0^{r} N_1^{\frac32}e^{-\nu N_0^2t/2}\\
&<C\de_1 N_0^{-2}
\end{align*}
for $N_0\gg1$ and $t\geq T_1$. Likewise,
\begin{align*}
\|\Bil(\cdot,t)\|_{H^r}&\leq C\int_{0}^{t} \|e^{\nu (t-s)\Delta} \, \PP\, 
(v(s) \otimes v(s) ) \|_{H^{r+1}} \, ds\\
&\leq C \int_{0}^{t} e^{-\nu (t-s)} \, \|v(s) \otimes v(s)  \|_{H^{r+1}}
\, ds\\
& \leq C\de_1^2 N_0^{r+1} N_1^{r+2}\,.
\end{align*}
\end{proof}

\subsubsection*{Step 3: Choice of the constants and conclusion of the
  proof}

Our goal now is to choose the constants $\de_k, N_k$ so that the
vortex structures of the fluid at time~$T_k$ are those of the field
$\cW_k$, modulo a small deformation that does not change their
topology. In order to do that, let us take a look at the
formula~\eqref{v(t)} for the field $v(\cdot,t)$. 

The strategy is to
ensure that the terms $\Lin(\cdot,t)$ and $\Bil(\cdot,t)$ (which are
harder to analyze) do not significantly contribute to the solution for
times up to $T_n$. This enables us to essentially restrict our
attention to the simple term $e^{\nu t\De} v_0$, where we will choose
the constants so that the leading term at time $T_k$ is precisely
$\de_k e^{-\nu N_k^2 T_k}\cW_k$. To make things precise and derive
effective estimates we will need to rescale the field at each time $T_k$.

To implement this strategy, let us start with the time $T_0=0$. Since 
\[
\|v_0\|_{H^r}\leq C \de_1  N_1^{r-\frac12}
\]
by the bound~\eqref{v0} and
taking into account that~$\cW_0=B_{N_0}$, it is clear
that if
\begin{equation}\label{ineq2}
\de_1  N_1^{r-\frac12} \ll  1\,,
\end{equation}
then Lemma~\ref{L.contractible} ensures that the field
\[
\tu_0:=M^{-1}u_0=\cW_0+M^{-1}v_0
\]
does not have any contractible vortex lines or vortex tubes. As
$M^{-1}u_0$ is simply a rescaling of the solution~$u$ at time~$0$, we
infer that if the above condition is satisfied, then all the vortex
lines and tubes of the fluid at time~0 are non-contractible. By
Lemma~\ref{L.contractible}, this property is structurally stable (and
is therefore satisfied for all small enough times).

Let us next consider the behavior of the fluid at time~$T_k$, with $1\leq k\leq
n$. Here it is convenient to rescale the velocity field by defining
\[
\tu_k:=\de_k^{-1}\, e^{\nu  N_k^2 T_k}\, u(\cdot, T_k)\,.
\]
It is clear that
\begin{multline*}
\tu_k=\cW_k+\frac M {\de_k}\, e^{-\nu ( N_0^2- N_k^2) T_k}\,\cW_0+
\sum_{1\leq j\neq k\leq n} \frac{\de_j}{\de_k}\, e^{\nu ( N_k^2
  - N_j^2) T_k} \cW_j\\
+ \de_k^{-1}\, e^{\nu  N_k^2 T_k}\,\big(\Lin(\cdot,T_k)+\Bil(\cdot,T_k)\big)\,.
\end{multline*}
Our objetive is to choose the constants $\de_j, N_j$ so that, for all
$1\leq k\leq n$,
\begin{align}
\frac{\de_{k-1}}{\de_k}\, e^{-\nu(N_{k-1}^2-N_k^2)T_k}&\ll
N_{k-1}^{-r}\,,\label{ineq3}\\
\frac{\de_{k+1}}{\de_k}\, e^{\nu(N_k^2-N_{k+1}^2)T_k}&\ll N_{k+1}^{-r}\,,\label{ineq4}
\end{align}
where $\de_0:=M$ and the second condition is of course
absent for $k=n$. As $N_k\gg
N_{k+1}$, $\de_k\gg\de_{k+1}$ and $T_k< T_{k+1}$, it is not
hard to see that if these conditions are satisfied for all $1\leq
k\leq n$, then
\begin{equation}\label{good1}
\bigg\| \frac M {\de_k}\, e^{-\nu ( N_0^2- N_k^2) T_k}\,\cW_0+
\sum_{1\leq j\neq k\leq n} \frac{\de_j}{\de_k}\, e^{\nu ( N_k^2
  - N_j^2) T_k} \cW_j\bigg\|_{H^r}\ll 1\,.
\end{equation}

We will also impose that 
\[
\de_k^{-1}\, e^{\nu  N_k^2
  T_k}\,\big(\|\Lin(\cdot,T_k)\|_{H^r}+\|\Bil(\cdot,T_k)\|_{H^r}\big)\ll 1
\]
for all $1\leq k\leq n$. Since
\[
\de_k^{-1}\, e^{\nu  N_k^2
  T_k}\ll   \de_{k+1}^{-1}\, e^{\nu  N_{k+1}^2
  T_k} \ll \de_{k+1}^{-1}\, e^{\nu  N_{k+1}^2
  T_{k+1}}\,,
\]
as a consequence of~\eqref{ineq3}-\eqref{ineq4}, it follows that it
suffices to impose that
\begin{equation}\label{ineq5}
\de_n^{-1}\, e^{\nu  N_n^2
  T_n}\,(\de_1N_0^{-2}+ \de_1^2N_0^{r+1}N_1^{r+2})\ll 1\,,
\end{equation}
where we have used the bounds for $\Lin$ and $\Bil$ derived in
Lemma~\ref{L.bound}.

If $1\leq k\leq n$ is odd, it then follows from Lemma~\ref{L.contractible}
that if the above conditions for the constants are satisfied, the
field $\tu_k$ (which is just a rescaling of $u(\cdot,T_k)$) has a
collection of vortex tubes and vortex lines diffeomorphic to the
set~$\cS_k$, and that this set is structurally stable. Likewise, if
$k$ is even, if the conditions are satisfied
Lemma~\ref{L.contractible} ensures that all the vortex lines and
vortex tubes of $\tu_k$ (and therefore of $u(\cdot,T_k)$) are
non-contractible, and that this property is structurally stable. This
automatically yields Theorem~\ref{T1}.

Hence it only remains to show that one can indeed choose the constants
$N_k$, $\de_k$ so that the conditions~\eqref{ineq1},
\eqref{ineq2}--\eqref{ineq4} and~\eqref{ineq5} are satisfied. To prove
this, starting with $N_n$, let us take any large constants
\[
1\ll N_n\ll N_{n-1}\ll\cdots \ll N_2 \ll \sqrt{N_1}\,,
\]
and set, for $1\leq k\leq n-1$,
\[
\rho_k:=e^{-\nu N_k^2(T_k+T_{k+1})/2}\,.
\]
In view of the first term in the
inequality~\eqref{ineq5}, let us take $N_0\gg N_1$ so that
\[
N_0^2\gg N_1^{r-1}\, e^{\nu T_n N_n^2}\prod_{k=1}^{n-1}\rho_k^{-1}\,.
\]
Notice that this implies that $N_0\gg N_1^{\frac{r-1}2}$, as assumed in
  Lemma~\ref{L.bound}.

In view of the second term in~\eqref{ineq5}, we now set
\[
\de_1:=c \,N_0^{-r-1}\,N_1^{-r-2}\, e^{-\nu T_n N_n^2}\prod_{k=1}^{n-1}\rho_k\,,
\]
where $c\ll 1$ is a small positive constant that does not depend on
the frequencies.
A short computation now shows that all the above conditions are
satisfied if one now recursively sets, for $1\leq k\leq n-1$,
\[
\de_{k+1}:=\de_k\,\rho_k\,.
\]
Notice that one then has $\prod_{j=1}^{k-1}\rho_k=\de_k/\de_1$.

Theorem~\ref{T1} is then proved, up to the minor issue that the $L^2$
norm of $u_0$ in the above construction is not $M$ but
\[
\|u_0\|_{L^2}=M+O(\de_1)\,.
\]
The final result then follows upon replacing the initial condition $u_0$ by
$q\, u_0$, with $q:=M/\|u_0\|_{L^2}$ satisfying $|q-1|< C\de_1$. It is
apparent that this factor does not change anything in the above
arguments, so one readily obtains the desired result.

\begin{remark}\label{R.Reynolds}
One can prove the same result where, instead of imposing that
the $L^2$~norm of the initial datum is an arbitrary constant~$M$, one
imposes that its Reynolds number is~$M$. We recall that the Reynolds
number is usually defined as
\[
\cR(u_0):=\frac{\|(u_0\cdot \nabla) u_0\|_{L^2}}{\nu\|\De u_0\|_{L^2}}\,.
\]
The proof is exactly as above, the only difference being that the initial
condition that one must take is
\[
u_0:=\nu M N_0\, \widetilde B_{N_0}+ \sum_{k=1}^n \de_k\,\cW_k\,,
\]
where the constants $N_k, \de_k$ can be chosen as before and we are
considering the family of Beltrami fields
\[
\widetilde B_N:=\sqrt 5\,(2\sin Nx_3,\sin Nx_1+2\cos Nx_3,\cos Nx_1)\,,
\]
which satisfy $\curl \widetilde B_N=N\widetilde B_N$.
This is the right normalization constant for~$\widetilde B_N$ in
this context, as it ensures that
\[
\frac{\|(\widetilde B_N\cdot \nabla) \widetilde B_N\|_{L^2}}{\|\De \widetilde B_N\|_{L^2}}=\frac1N\,,
\]
which easily implies that $\cR(u_0)$ is essentially~$M$.

Two important observations are in order. The first one is that the
factor $N_0^\ga$ with $\ga=1$ that we have put in front of $\widetilde
B_{N_0}$ does not change anything in the argument, while for $\ga>1$
it would change it dramatically. The reason is that the $L^2L^\infty$
norm of $N_0^\ga\, \widetilde B_{N_0}\, e^{-\nu N_0^2 t}$, which
appears in an exponential in Theorem~\ref{T.stability}, is uniformly
bounded with respect to the parameter~$N_0$ precisely for
$\ga\leq1$. The second observation is that we have replaced $B_{N_0}$
by $\widetilde B_{N_0}$ because $B_{N_0}$ satisfies the algebraic
identity
\[
(B_{N_0}\cdot\nabla)B_{N_0}=0\,,
\]
and this would have made the Reynolds number artificially small. It is worth
mentioning that the family $\widetilde B_N$ enjoys the same property
of robust non-contractibility established for the family $B_N$ in
Lemma~\ref{L.contractible}, and that the proof follows the same lines
(the algebra gets a little more awkward, though).
\end{remark}

\section{Instantaneous destruction of vortex tubes}
\label{S.destruction}

In this section we will prove Theorem~\ref{T2}. Given a $C^\infty$ function $h:\TT^2\to\RR$, let us consider as initial
vorticity the field
\begin{multline*}
\om_0:=M\,\big(\sin (x_3+\ep\, h), \cos (x_3+\ep\,
h),\\
 -\ep\pd_1h\, \sin (x_3+\ep\, h)-\ep\pd_2h\,\cos (x_3+\ep\, h)\big)\,,
\end{multline*}
where $M$ is a constant (which will be thought of as large), $h\equiv h(x_1,x_2)$ and $\ep$ is a small positive constant that will be specified
later. Notice that $\om_0$ is divergence-free and of zero mean, and that the associated
initial velocity can be written as
\begin{equation}\label{u0}
u_0:=M\,\big( \sin (x_3+\ep\, h),\cos (x_3+\ep\, h),0\big)+ \ep M\, \nabla\phi\,,
\end{equation}
where $\phi$ is the only solution to the elliptic equation on $\TT^3$
\[
\De\phi=\pd_2h\, \sin (x_3+\ep\, h) -\pd_1 h\, \cos (x_3+\ep\, h)\,,\qquad \int\T\phi\, dx=0\,.
\]
Notice that $u_0$ is then divergence-free and of zero mean.

Observe that the difference between the initial velocity and the
Beltrami field of unit frequency $W:=M(\sin x_3,\cos x_3,0)$ is obviously bounded as
\[
\|u_0-W\|_{H^k}< CM\ep\,.
\]
Since the solution to the Navier--Stokes equations with datum~$W$ is
$w(x,t)=e^{-\nu t}W(x)$, Theorem~\ref{T.stability} ensures that if
$M\ep$ is small enough there is a global smooth solution $u(x,t)$ to the
Navier--Stokes equations with initial datum~$u_0$.

To study the structure of the vortex tubes at time~0, it is convenient
to start by noticing that $\om_0$ is conjugated to the above Beltrami
field~$W$. Specifically, given the volume-preserving diffeomorphism of $\TT^3$
\begin{equation}\label{Phi}
\Phi(x):=(x_1,x_2,x_3+\ep\,h(x_1,x_2))\,,
\end{equation}
it can be readily checked that $\om_0$ can be written as the pullback
of~$W$ by~$\Phi$:
\[
\om_0=\Phi^*W\,.
\]
Since the surfaces $x_3=\text{constant}$ are vortex tubes for the
Beltrami field~$W$ and the vortex lines on these tubes are periodic or
quasi-periodic depending on whether the number~$\tan x_3$ is rational
or irrational, it stems that the same picture is valid for~$\om_0$, so
at time zero the torus $\TT^3$ is covered by vortex tubes with
periodic or quasi-periodic vortex lines.

We shall next use Melnikov's theory to show that some of these vortex
tubes (the ones covered by periodic vortex lines, which are resonant
invariant tori of the vorticity) can break down instantaneously. For
this we will need to consider the evolution of the equation for the
vortex lines. The vorticity formulation for the Navier--Stokes
equations ensures that
\begin{equation}\label{om}
\om=\om_0+t\,\big(\nu\,\De \om_0+(\om_0\cdot\nabla)
u_0-(u_0\cdot\nabla )\om_0\big) + O(t^2)\,.
\end{equation}
To study the vortex lines we introduce coordinates on $\TT^3$ associated
with the diffeomorphism~\eqref{Phi}, which we denote by
\[
X\equiv (X_1,X_2,X_3):=(x_1,x_2,x_3+\ep\, h(x_1,x_2))\,.
\]
In terms of these coordinates, the ODE for the vortex lines,
\[
\frac {d x}{d\tau}=\om(x,t)\,,
\]
where $\om(x,t)$ is given by~\eqref{om} and~$\tau$ is the parameter of
the vortex lines, reads as
\begin{equation}\label{eqX}
\frac{ dX}{d\tau}=M\,(\sin X_3,\cos X_3,0)+ t \, F+O(t^2)\,.
\end{equation}
The components of the vector field~$F$ are obtained by changing variables in
Eq.~\eqref{om}. After expanding in $\ep$ and performing a
straightforward but tedious computation one obtains that the third
component of $F$ can be written as
\begin{multline}\label{F3}
F_3=2\nu M\ep^2\big( \sin X_3  \,\pd_2h\,\pd_{22} h+ \sin X_3  \,\pd_1h\,\pd_{12} h-  \cos X_3 \,
 \pd_2  h\, \pd_{12} h- \cos X_3  \,\pd_1h\,
   \pd_{11}h\big) \\
+ (\sin X_3\,\pd_1+\cos X_3\,\pd_2)\psi+O(\ep^3)\,,
\end{multline}
where $\psi$ is certain function on~$\TT^3$ of order $O(\ep)$ 
whose (rather awkward) 
expression will not be needed. Here the partial derivatives should be
interpreted as
\[
\pd^\al f \equiv (\pd^\al f)(X_1,X_2,X_3-\ep \, h(X_1,X_2))\,,
\]
that is, as the composition of the function $\pd_x^\al f(x)$ (the
derivatives being taken with respect to the original variables~$x$)
with the diffeomorphism $\Phi^{-1}$ that passes from the
coordinates~$X$ to~$x$.  Notice that the term $O(\ep^3)$ is not uniformly
bounded in~$M$.

To apply Melnikov's theory we shall write the ODE for the vortex lines
as a non-autonomous dynamical system on the plane. To this end we
shall restrict our attention to a region covered by vortex tubes of
the initial vorticity and where $\sin X_3$ is nonnegative, such as
\begin{equation}\label{region}
X_3\in\Big(\frac\pi4,\frac{3\pi}8\Big)\,,
\end{equation}
and to small times, which is not a serious drawback as we intend to
prove the instantaneous destruction of invariant tori. We shall then
define, in this region and for small enough~$t$, a new parameter for the vortex lines as
\[
ds:=(M\, \sin X_3(\tau)+t \, F_1(X(\tau),t) + O(t^2))\, d\tau\, \,,
\]
so that Equation~\eqref{eqX} can be written as
\begin{subequations}\label{planar}
\begin{align}
\frac{dX_1}{ds}&=1\,,\\
\frac{d X_2}{ds}&= \cot X_3+O(t)\,,\\
\frac{d X_3}{ds}&=\frac{t\, F_3}{M\,\sin X_3}+O(t^2)\,.
\end{align}
\end{subequations}

Since one can integrate the first equation to find that
\[
X_1=s+\xi \,,
\]
where $\xi$ is a constant, it is now enough to analyze the nonautonomous planar system defined by
the second and third components of the above system with $X_1$
replaced by $s+\xi $. At $t=0$, the integral curves of this planar
system are
\begin{equation}\label{intcurvX}
X_2=X_2^0 +s\,\cot X_3^0\,,\qquad X_3= X_3^0\,,
\end{equation}
so, since $(X_2,X_3)\in\TT^2$, it turns out that the planar integral curve
with initial condition $(X_2^0,X_3^0)$ is periodic if and only if
$\cot X_3^0$ is a rational number, its period being~$2\pi q$ if $\cot
X_3^0$ is given by the irreducible fraction $p/q$.

Let us consider a vortex tube of the initial vorticity given in the
coordinates~$X$ by the equation
\[
\cot X_3=\frac p q\,,
\]
with $p$, $q$ coprime integers and $p/q$ in the interval
$(\cot\frac{3\pi}8,1)$, the latter restriction coming from the inclusion~\eqref{region}. In terms of the associated nonautonomous
planar system, the corresponding Melnikov function is~\cite[Theorem 4.6.2]{GH}
\[
\cM(\xi ):=\int_0^{2\pi q}\frac{\cot X_3\, F_3}{M\,\sin
  X_3}\bigg|_{(s+\xi , X_2^0+\frac pq s ,\arctan \frac qp)}\, ds
\]
It is standard that $\cM(\xi)$ does not depend on~$X_2^0$, so we will
take $X_2^0:=0$. Using the
expression~\eqref{F3} for~$F_3$, one immediately finds that
\begin{multline*}
\cM(\xi )=\frac{2p\nu \ep^2}q\int_0^{2\pi q}\Big( \pd_2h\,\pd_{22} h+
\pd_1h\,\pd_{12} h-  \frac pq \,
 \pd_2  h\, \pd_{12} h- \frac pq  \,\pd_1h\,
   \pd_{11}h\Big)\Big|_{(s+\xi , \frac pq s ,\arctan \frac qp)}\, ds\\
+\frac{p}{qM}\int_0^{2\pi q}\Big(\pd_1+\frac
pq\,\pd_2\Big)\psi\big|_{(s+\xi , \frac pqs ,\arctan\frac qp)}\, ds+ O(\ep^3)\,.
\end{multline*}
It is clear that the second integral vanishes by periodicity because
\[
\int_0^{2\pi q}\Big(\pd_1+\frac
pq\,\pd_2\Big)\psi\big|_{(s+\xi , \frac pqs ,\arctan\frac qp)}\, ds=
\int_0^{2\pi q}\frac d{ds}\Big(\psi\big|_{(s+\xi , \frac pqs ,\arctan\frac qp)}\Big)\, ds=0\,.
\]

Our goal now is to choose the function~$h$ so that the Melnikov
function vanishes and all its zeros are simple. It is not hard to see
that there are many ways to accomplish this. For example, one can make
the computations with the choice
\begin{equation}\label{hcos}
h(x_1,x_2):=\cos(p x_1-q x_2)
\end{equation}
to obtain
\begin{equation}\label{cM}
\cM(\xi)= -\frac{2\pi\nu \ep^2 p(p^2+q^2)^2}q\sin(2p\xi)+O(\ep^3)\,,
\end{equation}

Since the vorticity is divergence-free and the diffeomorphism $\Phi$
is volume preserving, it is clear that the flow of~\eqref{eqX}
preserves the measure $dX:= dX_1\, dX_2\, dX_3$. In turn, this implies
that the flow of the rescaled system~\eqref{planar} preserves the measure
\[
\rho\, dX\,,
\]
with
\[
\rho:= M\, \sin X_3+t \, F_1(X,t) + O(t^2)\,,
\]
which is well defined in the region~\eqref{region} for small
enough~$t$. 

Let us now show that the vector field~$V$ defined by the right hand
side of the rescaled system~\eqref{planar} satisfies two
important additional technical conditions. Firstly, the 2-form defined
by
\[
\al:= i_V(\rho\, dX_1\wedge dX_2\wedge dX_3)
\]
where $i_V$ denotes the inner product of the vector field~$V$ with a 3-form, is exact. In order
to see this, notice that $\rho\, V=\Phi_*\om$ is the expression of the
vorticity in the coordinates~$X$, so one has
\begin{align*}
\al= i_{\rho V}(dX_1\wedge dX_2\wedge dX_3)=\Phi_*\big( i_\om
(dx_1\wedge dx_2\wedge dx_3)\big)\,.
\end{align*}
Let us denote by $\be$ the 1-form dual to the velocity field~$u$, defined
in terms of the Euclidean metric as
\[
\be(v):=u\cdot v\,.
\]
Using differential forms to characterize $\om:=\curl u$, it
is standard that
\[
i_\om (dx_1\wedge dx_2\wedge dx_3)=d\be\,,
\]
which shows that $\al$ is exact, as claimed:
\[
\al= \Phi_* (d\be)=d(\Phi_*\be)\,.
\]

The second technical fact is that the vector field~$V$ satisfies a
twist condition. More precisely, by Equation~\eqref{intcurvX} the period of the function $X_2(s)$ is $2\pi/\cot
X_3^0$, whose derivative with respect to~$X_3^0$ does not vanish in
the interval~\eqref{region}, so the period of the vortex lines in the variable~$s$ is
different, in general, on distinct vortex tubes.

Since these technical conditions are satisfied, one can then apply a
Melnikov-type theorem~\cite[Theorem 4.8.3]{GH} to the
function~\eqref{cM}. The function $\cM(\xi)$ has exactly 4~zeros in the
interval $\xi\in [0,2\pi/p)$, all of which are nondegenerate (meaning
that the derivative does not vanish at these points). The theorem then
ensures that the invariant torus of equation $\cot X_3=p/q$ breaks
down for all small enough positive times, and that only 4~among the
period~$2\pi q$ integral curves on this invariant torus survive for
small positive times;
furthermore, two of them are elliptic and the other two are
hyperbolic.

Since $\rho V=\Phi_*\om$, the integral curves of the field~$V$ are
diffeomorphic to those of the vorticity, so from the above statement
about the breakdown of the invariant tori of~$V$ it stems that one of
the vortex tubes at initial time (corresponding to $\Phi^{-1}(\{X:
\cot X_3=p/q\})$) also breaks down instantaneously.

Theorem~\ref{T2} is then proved. More precisely, the Melnikov
theory~\cite[Theorem 4.8.3]{GH} guarantees that, as a consequence of  the instantaneous breakdown of the invariant torus $\cot X_3=p/q$, at any small
  enough positive times there appear integral curves (homoclinic or heteroclinic connections) that are not periodic or quasiperiodic and are
  not tangent to an invariant torus. The Melnikov theory also ensures
  that, by bifurcation, for any small positive time the destruction of
  this invariant torus gives rise to two elliptic periodic integral
  curves and to two hyperbolic periodic vortex lines with intersecting stable
  and unstable manifolds.

\begin{remark}\label{R.nonlinear}
It follows from the proof of the theorem that, in fact, the
contribution of the nonlinear term $(\om_0\cdot\nabla)
u_0-(u_0\cdot\nabla )\om_0$ to the Melnikov function is zero, as one
can infer from the fact that the Euler equation does not feature
vortex reconnection. This is a general fact, and does not depend on
our choice of the initial datum. Notice, moreover, that
Equation~\eqref{cM} ensures that the
size of the perturbation, $\ep$, must be of order~$o(\nu)$, which
explains why this scenario of instantaneous vortex reconnection does
not survive in the vanishing viscosity limit.
\end{remark}

\begin{remark}
  A very minor modification of the argument permits to break
  instantaneously any finite number of the initial configuration of
  vortex tubes, not just one.
\end{remark}

\section{Conclusions}
\label{S.discussion}

In this section we shall collect a number of remarks and observations
about the proofs of Theorems~\ref{T1} and~\ref{T2} that provide
further insight into this scenario of vortex reconnection.

Firstly, notice that the PDE aspects of the proofs of
Theorems~\ref{T1} and~\ref{T2} are essentially linear, in the sense
that we are concerned with small perturbations of a solution to the
Navier--Stokes equations of the form $w(x,t):=M\, e^{-\nu N_0^2 t}\,
B_{N_0}(x)$ with $B_{N_0}$ a Beltrami field of high frequency~$N_0$. Notice
that, as vortex reconnection is a dissipative effect, it should not be too
surprising that the proof can be carried out in an essentially linear regime. This is seen
very clearly in Remark~\ref{R.nonlinear}. The way this should be
interpreted is that, although the evolution of the fluid takes place,
in general, in the nonlinear regime, in the creation or destruction of
vortex structures the straw that actually breaks the laden camel's
back is in fact essentially linear. 

The heart of the proof is a high-frequency analysis
in which we study which one among several terms of different frequencies (all of
which are large) is dominant at different time scales $T_0<T_1<\cdots
<T_n$. Although this analysis is made simpler and finer by the fact
that the terms can be taken as Beltrami fields, the underlying
interplay between different frequencies could have been carried out in
more general situations. Notice that, for any choice of the
distinct-frequency terms, to rigorously pass from the frequency analysis to the
statement that there is indeed the change of topology that constitutes
the vortex reconnection, it is essential to have a ``stable
topological non-equivalence'' theorem like our
Lemma~\ref{L.contractible}. This result can be extended to cover more
general families of vector fields, but in all cases it is a quite non-trivial KAM-theoretic argument in
itself.

The proof of the aforementioned Lemma~\ref{L.contractible} (and
therefore that of Theorem~\ref{T1}) uses in a crucial way the periodic
boundary conditions, that is, the fact that the spatial variable takes
values in~$\TT^3$. This is because this allows us to play with the
quite robust concept of contractibility. In contrast, this condition
does not play a role in the proof of Theorem~\ref{T2}, and in fact one
can establish a similar result on~$\RR^3$ with a completely analogous
reasoning.

It is also worth mentioning that the reconnection mechanism that we have presented in this paper does
not depend much on the form of the dissipative term of the
Navier--Stokes equations. Indeed, the fact that the dissipative term
is given by the Laplacian has only been used to write that the time-dependent
factor that appears in solutions to the Navier--Stokes
equations whose initial datum is a Beltrami field goes as the
exponential of the squared frequency, which is not essential, and to
derive heat kernel estimates. In particular, the argument goes through
for analogs of the Navier--Stokes equations that feature fractional
dissipation of the form
\[
\pd_t u+ (u\cdot\nabla)u+\nu(-\De)^\al u=-\nabla P\,,\qquad \Div u=0\,,\qquad
u(\cdot,0)=u_0
\]
with $\al>0$.

The fact that unstable vortex structures such as resonant tori can
break down instantaneously explains why there is abundant literature
on ``possibly robust'' vortex structures, such as vortex lines and
vortex tubes (which are robust when they satisfy suitable
non-degeneracy conditions), but not on, for example, ``vortex balls''
or ``vortex pretzels'' (that is, spheres or genus-2 tori consisting of
vortex lines), for which no robustness properties are expected in the
theory of divergence-free dynamical systems.

To conclude, it is worth explaining why the
difference between two fields needs to be controlled in several parts
of the proofs through the norm $\|\curl W-\curl W'\|_{C^{3,\al}}$ (or,
for convenience, through the stronger norms $\|W-W'\|_{C^{4,\al}}$ or
$\|W-W'\|_{H^7}$). The reason is that controlling the vortex tubes of
$W'$ in terms of those of $W$ essentially boils down to controlling
invariant circles of suitable annulus diffeomorphisms defined by the
flow of the fields $\curl W$ and $\curl W'$, and in this context it is
known that $C^{3,\al}$~bounds are sufficient~\cite{Herman} (and
essentially necessary~\cite{Cheng}) for the convergence of a KAM scheme.

\section*{Acknowledgments}

The authors are supported by the ERC Starting Grants~633152 (A.E.),
277778 (R.L.) and~335079
(D.P.-S.). This work is supported in part by the
ICMAT--Severo Ochoa grant
SEV-2015-0554.

\bibliographystyle{amsplain}

\end{document}